\definecolor{darkgreen}{RGB}{45, 119, 75}
\newcommand{\supp}{\text{supp }}
\newtheorem{theorem}{Theorem}[section]
\newtheorem{corollary}[theorem]{Corollary}
\newtheorem{lemma}[theorem]{Lemma}
\newtheorem{proposition}[theorem]{Proposition}
\newtheorem{definition}[theorem]{Definition}
\newtheorem{fact}[theorem]{Fact}
\numberwithin{equation}{section}
\begin{document}

\title[Behavior of eigenvalues]{Behavior of eigenvalues \\\ of certain Schr\"odinger operators \\ in the rational Dunkl setting}

\subjclass[2010]{{primary: 35P20; secondary: 35J10,  42B37, 35K08, 42B35.}}
\keywords{Rational Dunkl theory, Schr\"odinger operators, Asymptotic distributions of eigenvalues, Reverse H\"older classes, Fefferman--Phong inequality.}

\author[Agnieszka Hejna]{Agnieszka Hejna}
\begin{abstract}
  For a normalized root system $R$ in $\mathbb R^N$ and a multiplicity function $k\geq 0$ let $\mathbf N=N+\sum_{\alpha \in R} k(\alpha)$. We denote by $dw(\mathbf{x})=\Pi_{\alpha \in R}|\langle \mathbf{x},\alpha \rangle|^{k(\alpha)}\,d\mathbf{x}$ the associated measure in $\mathbb{R}^N$.
Let $L=-\Delta +V$, $V\geq 0$,  be the Dunkl--Schr\"odinger operator on $\mathbb R^N$. Assume that there exists $q >\max(1,\frac{\mathbf{N}}{2})$ such that  $V$ belongs to the reverse H\"older class ${\rm{RH}}^{q}(dw)$. For $\lambda>0$ we provide  upper and lower estimates for the number of eigenvalues of $L$ which are less or equal to $\lambda$. Our main tool in the Fefferman--Phong type inequality in the rational Dunkl setting.
\end{abstract}

\address{Agnieszka Hejna, Uniwersytet Wroc\l awski,
Instytut Matematyczny,
Pl. Grunwaldzki 2/4,
50-384 Wroc\l aw,
Poland}
\email{hejna@math.uni.wroc.pl}

\thanks{
Research supported by the National Science Centre, Poland (Narodowe Centrum Nauki), Grant 2017/25/B/ST1/00599}

\maketitle

\section{Introduction}
On $\mathbb R^N$ equipped with a normalized root system $R$ and a multiplicity function $k \geq 0$, we consider a Schr\"odinger operator 
$$ L=-\Delta +V(\mathbf{x}),$$
where $\Delta$  is the Dunkl Laplacian and $V$ is a non-negative function which satisfies the reverse H\"older inequality: 
\begin{equation}\label{eq:RevHol} \Big( \frac{1}{w(B)} \int_B V(\mathbf y)^q\, dw(\mathbf y)\Big)^{1/q}\leq \frac{C_{\rm RH}}{w(B)}\int_B V(\mathbf y)\, dw(\mathbf y) \quad \text{\rm for every ball }B.
\end{equation}
Here and subsequently, 
\begin{equation}\label{eq:measure_formula}
dw(\mathbf x)=\prod_{\alpha\in R}|\langle \mathbf x,\alpha\rangle|^{k(\alpha)}\, d\mathbf x
\end{equation} 
is the associated measure. We shall assume that $q>\max (1,\frac{\mathbf N}{2})$, where $\mathbf N=N+\sum_{\alpha\in R} k(\alpha)$ is the homogeneous dimension. Following \cite[p.  146, the assumption of the main lemma]{Fefferman} (see also~\cite[(4.1)]{Hejna} for its counterpart in the rational Dunkl setting) we define the auxiliary function $m(\mathbf x)$ by the formula 
\begin{equation}\label{eq:m}
    \frac{1}{m(\mathbf{x})}=\sup\left\{r>0\,:\; \frac{r^2}{w(B(\mathbf{x},r))}\int_{B(\mathbf{x},r)}V(\mathbf{y})\,dw(\mathbf{y}) \leq 1 \right\}.
\end{equation}
The function $m$ is well defined and satisfies $0<m(\mathbf x)<\infty$ for every $\mathbf x\in\mathbb R^N$, see~\cite{Hejna}.  

For $\lambda>0$, we denote by $N(L,\lambda)$ the number of eigenvalues of the operator  $L$, counting with their multiplicities, which are  less than or  equal to $\lambda$. 

For $a>0$ we define
\begin{equation}\label{eq:grid}
    {({\rm Grid})_{a}}= \{[0,a]^N+a\mathbf{n}\;:\;\mathbf{n} \in \mathbb{Z}^{N}\}.
\end{equation}
Our goal is to prove the following theorem. 
\begin{theorem}\label{teo:main_box}
Assume that $V \in {\rm{RH}}^{q}(dw)$, where $q>\max(1,\frac{\mathbf{N}}{2})$, and $V \geq 0$. For $\lambda>0$ we set 
\begin{align*}
    E_{\lambda}=\{\mathbf{x} \in \mathbb{R}^N\;:\; m(\mathbf{x}) \leq \sqrt{\lambda}\}.
\end{align*}
Let $M(\lambda)$ denote the number of cubes $K$ from the {$({\rm Grid})_{\lambda^{-1/2}}$}  (see~\eqref{eq:grid}) such that $K \cap E_{\lambda} \neq \emptyset$. There are constants $C_1,C_2,C_3>0$, which depend on $R$, $N$, $q$, $k$ and the constant $C_{\rm RH}$ (see \eqref{eq:RevHol}) such that for all $\lambda>0$ we have
\begin{equation}\label{eq:main_1}
    M(C_1^{-1}\lambda) \leq N(L,\lambda) \leq C_2M(C_3^{-1}\lambda).
\end{equation}
Let $\lambda_0(L)$ denote  the smallest eigenvalue of $L$. There is a constant $C_4>0$, which depends on $R$, $N$, $q$, $k$ and the constant $C_{\rm RH}$, such that
\begin{equation}\label{eq:min_m}
    \lambda_{0}(L) \geq C_4 \min_{\mathbf{x} \in \mathbb{R}^N}m(\mathbf{x}).
\end{equation}
Actually, one can take $C_4=(\widetilde{C})^{1/2}$, where $\widetilde{C}$ is the constant from~\eqref{eq:F-Ph}.
\end{theorem}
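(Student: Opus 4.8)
The plan is to follow the classical Fefferman--Phong strategy adapted to the Dunkl setting, splitting \eqref{eq:main_1} into an upper bound and a lower bound for $N(L,\lambda)$, and to derive \eqref{eq:min_m} as a by-product of the lower bound on the operator $L$. The central analytic input is the Fefferman--Phong type inequality \eqref{eq:F-Ph} (stated later in the paper), which should assert that
\begin{equation*}
  \int_{\mathbb R^N}|\nabla u(\mathbf x)|^2\,dw(\mathbf x)+\int_{\mathbb R^N}V(\mathbf x)|u(\mathbf x)|^2\,dw(\mathbf x)\;\geq\;\widetilde C\int_{\mathbb R^N}m(\mathbf x)^2|u(\mathbf x)|^2\,dw(\mathbf x)
\end{equation*}
for all $u$ in the form domain, where $\nabla$ is the Dunkl gradient. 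Granting this, \eqref{eq:min_m} is immediate: by the variational characterization $\lambda_0(L)=\inf_{\|u\|_{L^2(dw)}=1}\langle Lu,u\rangle$, and the right-hand side is at least $\widetilde C\int m(\mathbf x)^2|u(\mathbf x)|^2\,dw\geq\widetilde C\,(\min_{\mathbf x}m(\mathbf x))^2\|u\|_{L^2(dw)}^2$; hence $\lambda_0(L)\geq\widetilde C(\min_{\mathbf x}m(\mathbf x))^2$. Wait---one must be careful about the exponent: the claim is $\lambda_0(L)\geq C_4\min m(\mathbf x)$ with $C_4=\widetilde C^{1/2}$, so presumably the normalization of \eqref{eq:F-Ph} is phrased with $m(\mathbf x)$ rather than $m(\mathbf x)^2$, or $\lambda$ is compared to $\sqrt\lambda$ consistently with the definition of $E_\lambda$; I would check this normalization first and phrase the argument accordingly.

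For the \textbf{lower bound} $M(C_1^{-1}\lambda)\leq N(L,\lambda)$, the idea is to construct a subspace of dimension $\gtrsim M(C_1^{-1}\lambda)$ on which the quadratic form $\langle Lu,u\rangle$ is bounded above by $\lambda\|u\|_{L^2(dw)}^2$, and then invoke the min-max principle. Concretely, for each cube $K$ in $({\rm Grid})_{\lambda^{-1/2}}$ meeting $E_{\lambda}$ one builds a bump function $u_K$ supported in a fixed dilate of $K$, with $\|\nabla u_K\|_{L^2(dw)}^2\lesssim\lambda\|u_K\|_{L^2(dw)}^2$ (this costs $\lambda$ because the cube has side $\lambda^{-1/2}$) and $\int V|u_K|^2\,dw\lesssim\lambda\|u_K\|_{L^2(dw)}^2$ (this uses that $m(\mathbf x)\leq\sqrt\lambda$ somewhere in $K$, hence by the definition \eqref{eq:m} and the reverse Hölder doubling property of $V\,dw$, the local average of $V$ over a ball of radius $\sim\lambda^{-1/2}$ around that point is $\lesssim\lambda$; one transfers this to the cube using \eqref{eq:RevHol} and the fact that $dw$ is doubling). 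The $u_K$'s attached to well-separated cubes are orthogonal; for overlapping dilates one uses bounded overlap of the grid to extract a fixed fraction, or one works with the form directly on the span. The Dunkl subtlety here is that the Dunkl gradient is nonlocal (it involves the reflections $\sigma_\alpha$), so "compactly supported" test functions do not have compactly supported Dunkl gradient unless one is careful; I expect one uses instead the estimate $\int|\nabla u|^2\,dw=\int|\nabla_{\mathrm{eucl}}u|^2\,dw+(\text{difference part})$ and controls the difference part, or---more likely, following \cite{Hejna}---one chooses $u_K$ to be a genuine Euclidean bump and uses that on such functions the Dunkl and Euclidean Dirichlet forms are comparable up to the multiplicity terms, which are themselves controlled because $dw$ already encodes the weight.

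For the \textbf{upper bound} $N(L,\lambda)\leq C_2M(C_3^{-1}\lambda)$, the standard route is: (i) reduce to counting, via Dirichlet--Neumann bracketing on the grid $({\rm Grid})_{c\lambda^{-1/2}}$, the sum over cubes $K$ of the number of eigenvalues $\leq\lambda$ of the local operator $-\Delta+V$ with Neumann conditions on $K$; (ii) on a cube $K$ \emph{not} meeting $E_{C_3^{-1}\lambda}$ one has $m(\mathbf x)>\sqrt{C_3^{-1}\lambda}$ throughout $K$, so by \eqref{eq:m} the average of $V$ over $K$ is $\gtrsim C_3^{-1}\lambda\cdot(\text{scale})^{-2}\sim\lambda$ after choosing constants, which via a local Fefferman--Phong / Poincaré--with--potential inequality on the single cube forces the Neumann operator to have \emph{no} eigenvalue $\leq\lambda$ (or only the bounded number coming from the kernel, absorbed into $C_2$); (iii) on the remaining cubes---those counted by $M(C_3^{-1}\lambda)$---one uses the crude bound that the number of Neumann eigenvalues $\leq\lambda$ of $-\Delta$ on a cube of side $\sim\lambda^{-1/2}$ is $O(1)$ in this scaling (a Dunkl Weyl-type or Cwikel--Lieb--Rozenblum-type estimate in the space $(\mathbb R^N,dw)$, again available from earlier sections). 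Summing gives the bound. I anticipate the \textbf{main obstacle} to be step (ii) of the upper bound together with the nonlocality of the Dunkl Laplacian: making the "Neumann bracketing" rigorous when the kinetic term is the nonlocal Dunkl Dirichlet form requires either restricting $\nabla$ to its local part on cubes (and controlling the error from the reflection part, which can connect distant cubes related by the Weyl group) or working $W$-invariantly and bracketing on fundamental-domain-adapted regions; resolving this cleanly---probably by exploiting that \eqref{eq:F-Ph} already packages the hard estimate and one only needs it cube-by-cube with uniform constants---is where the real work lies, and I would structure the proof so that all nonlocal difficulties are funneled into the single application of \eqref{eq:F-Ph} and its local analogue proved earlier in the paper.
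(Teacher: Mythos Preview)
Your treatment of the lower bound $M(C_1^{-1}\lambda)\le N(L,\lambda)$ and of \eqref{eq:min_m} is essentially the paper's argument: bump functions on grid cubes meeting $E_\lambda$, the bound $\|T_jf\|_{L^\infty}\le C\|\partial_jf\|_{L^\infty}$ to tame the nonlocal part of the Dunkl gradient, and a direct appeal to the Fefferman--Phong inequality for the smallest eigenvalue. (Your hesitation about the exponent in \eqref{eq:min_m} is warranted; there is a harmless normalization slip in the statement/proof of that inequality in the paper as well.)

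The upper bound, however, is where your proposal diverges from the paper and where it has a genuine gap. You propose Dirichlet--Neumann bracketing on the grid followed by local eigenvalue counts, and you yourself flag the obstacle: the Dunkl Dirichlet form is nonlocal through the reflection terms, so restricting to a cube does not yield a well-defined local Neumann problem in any obvious way, and there is no available CLR/Weyl bound for a ``Dunkl Neumann Laplacian on a cube'' to invoke in step~(iii). The paper does \emph{not} attempt bracketing. Instead it works globally with the form and builds the finite-dimensional subspace $\mathcal H$ directly, as follows: use the partition of unity $\{\phi_Q\}$ attached to the stopping-time cubes $\mathcal Q$ (not to the grid); for $Q$ with $Q^{***}\not\subset E_\lambda$ control $\lambda\|u\phi_Q\|^2$ by $\int|u|^2m^2\,dw$ via Fact~\ref{fact:m_d_compare}; for $Q$ with $Q^{***}\subset E_\lambda$ split
\[
u\phi_Q=(u\phi_Q-\Psi^\lambda*(u\phi_Q))+\Psi^\lambda*(u\phi_Q),
\]
where $\Psi$ is a radial $C_c^\infty$ bump and $*$ is the Dunkl convolution. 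The first piece is handled by a pseudo-Poincar\'e inequality via Plancherel and \eqref{eq:der_transform}, together with Lemma~\ref{lem:gradient}, giving $\sum_j\int_{Q^*}|T_ju|^2+\int_{\mathcal O(Q^*)}|u|^2m^2$. For the second piece one takes $\mathcal H_Q=\operatorname{span}\{\Psi^\lambda(\mathbf x_K,\cdot)\phi_Q(\cdot):K\in(\mathrm{Grid})_{\varepsilon\lambda^{-1/2}},\ K\cap Q^*\neq\emptyset\}$; then for $u\perp\mathcal H_Q$ the kernel $\Psi^\lambda(\mathbf x,\mathbf y)$ can be replaced by $\Psi^\lambda(\mathbf x,\mathbf y)-\Psi^\lambda(\mathbf x_K,\mathbf y)$, and the new H\"older estimate for Dunkl translations of radial functions (Lemma~\ref{lem:Psi_radial}) plus Schur's test give an $O(\varepsilon^2\lambda\|u\phi_Q\|^2)$ bound, absorbable for small $\varepsilon$. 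Summing over $Q$ (with bounded overlap from Proposition~\ref{propo:F}) and applying the global Fefferman--Phong inequality once finishes the proof.

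So the missing idea in your upper bound is precisely this convolution/sampling construction, which replaces Neumann bracketing and sidesteps the nonlocality entirely; Lemma~\ref{lem:Psi_radial} is the technical engine you would need and do not mention.
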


For classical Schr\"odinger operators with reverse H\"older class potentials on $\mathbb R^N$ behavior of eigenvalues were studied in the seminal article Feffreman \cite{Fefferman} and then continued by many authors (see e.g.\cite{KurataSugano},~\cite{Shen3},~\cite{Shen4},~\cite{Simon},~\cite{Tachizawa}).  The present article takes inspirations from there. 

\section{Preliminaries and notation}

In this section we present necessary definitions and lemmas (with references), which will be used in the proof of Theorem~\ref{teo:main_box}.

\subsection{Basic definitions of the Dunkl theory}

In this section we present basic facts concerning the theory of the Dunkl operators.  For details we refer the reader to~\cite{Dunkl},~\cite{Roesler3}, and~\cite{Roesler-Voit}. 

We consider the Euclidean space $\mathbb R^N$ with the scalar product $\langle\mathbf x,\mathbf y\rangle=\sum_{j=1}^N x_jy_j
$, where $\mathbf x=(x_1,...,x_N)$, $\mathbf y=(y_1,...,y_N)$, and the norm $\| \mathbf x\|^2=\langle \mathbf x,\mathbf x\rangle$. For a nonzero vector $\alpha\in\mathbb R^N$,  the reflection $\sigma_\alpha$ with respect to the hyperplane $\alpha^\perp$ orthogonal to $\alpha$ is given by
\begin{align*}
\sigma_\alpha (\mathbf x)=\mathbf x-2\frac{\langle \mathbf x,\alpha\rangle}{\| \alpha\| ^2}\alpha.
\end{align*}
In this paper we fix a normalized root system in $\mathbb R^N$, that is, a finite set  $R\subset \mathbb R^N\setminus\{0\}$ such that $R \cap \alpha \mathbb{R} = \{\pm \alpha\}$,  $\sigma_\alpha (R)=R$, and $\|\alpha\|=\sqrt{2}$ for all $\alpha\in R$. The finite group $G$ generated by the reflections $\sigma_\alpha \in R$ is called the {\it Weyl group} ({\it reflection group}) of the root system. A~{\textit{multiplicity function}} is a $G$-invariant function $k:R\to\mathbb C$ which will be fixed and $\geq 0$  throughout this paper. 
 Let
\begin{equation*}
dw(\mathbf x)=\prod_{\alpha\in R}|\langle \mathbf x,\alpha\rangle|^{k(\alpha)}\, d\mathbf x
\end{equation*} 
be  the associated measure in $\mathbb R^N$, where, here and subsequently, $d\mathbf x$ stands for the Lebesgue measure in $\mathbb R^N$.
We denote by 
\begin{align*}
\mathbf{N}=N+\sum_{\alpha \in R} k(\alpha)
\end{align*}
the homogeneous dimension of the system. Clearly, 
\begin{align*} w(B(t\mathbf x, tr))=t^{\mathbf N}w(B(\mathbf x,r)) \ \ \text{\rm for all } \mathbf x\in\mathbb R^N, \ t,r>0,   
\end{align*}
 where $B(\mathbf x, r)=\{\mathbf y\in\mathbb R^N: \|\mathbf y-\mathbf x\|<r\}$. Moreover, 
\begin{align*}
\int_{\mathbb R^N} f(\mathbf x)\, dw(\mathbf x)=\int_{\mathbb R^N} t^{-\mathbf N} f(\mathbf x\slash t)\, dw(\mathbf x)\ \ \text{for} \ f\in L^1(dw)  \   \text{\rm and} \  t>0.
\end{align*}
Observe that there is a constant $C>0$ such that 
\begin{equation}\label{eq:balls_asymp} 
C^{-1}w(B(\mathbf x,r))\leq  r^{N}\prod_{\alpha \in R} (|\langle \mathbf x,\alpha\rangle |+r)^{k(\alpha)}\leq C w(B(\mathbf x,r)),
\end{equation}
so $dw(\mathbf x)$ is doubling, that is, there is a constant $C>0$ such that
\begin{equation}\label{eq:doubling} w(B(\mathbf x,2r))\leq C w(B(\mathbf x,r)) \ \ \text{ for all } \mathbf x\in\mathbb R^N, \ r>0.
\end{equation}
Moreover, there exists a constant $C\ge1$ such that,
for every $\mathbf{x}\in\mathbb{R}^N$ and for every $r_2\ge r_1>0$,
\begin{equation}\label{eq:growth}
C^{-1}\Big(\frac{r_2}{r_1}\Big)^{N}\leq\frac{{w}(B(\mathbf{x},r_2))}{{w}(B(\mathbf{x},r_1))}\leq C \Big(\frac{r_2}{r_1}\Big)^{\mathbf{N}}.
\end{equation}

For a measurable subset $A$ of $\mathbb{R}^N$ we define 
\begin{align*}
    \mathcal{O}(A)=\{\sigma_{\alpha}(\mathbf{x})\,:\, \mathbf{x} \in A, \, \alpha \in R\}.
\end{align*}
Clearly, by~\eqref{eq:balls_asymp}, for all $\mathbf{x} \in \mathbb{R}^N$ and $r>0$ we get
\begin{align*}
    w(\mathcal{O}(B(\mathbf{x},r))) \leq |G|w(B(\mathbf{x},r)).
\end{align*}

For $\xi \in \mathbb{R}^N$, the {\it Dunkl operators} $T_\xi$  are the following $k$-deformations of the directional derivatives $\partial_\xi$ by a  difference operator:
\begin{align*}
     T_\xi f(\mathbf x)= \partial_\xi f(\mathbf x) + \sum_{\alpha\in R} \frac{k(\alpha)}{2}\langle\alpha ,\xi\rangle\frac{f(\mathbf x)-f(\sigma_\alpha(\mathbf{x}))}{\langle \alpha,\mathbf x\rangle}.
\end{align*}
The Dunkl operators $T_{\xi}$, which were introduced in~\cite{Dunkl}, commute and are skew-symmetric with respect to the $G$-invariant measure $dw$.

For fixed $\mathbf y\in\mathbb R^N$ the {\it Dunkl kernel} $E(\mathbf x,\mathbf y)$ is the unique analytic solution to the system
\begin{align*}
    T_\xi f=\langle \xi,\mathbf y\rangle f, \ \ f(0)=1.
\end{align*}
The function $E(\mathbf x ,\mathbf y)$, which generalizes the exponential  function $e^{\langle \mathbf x,\mathbf y\rangle}$, has the unique extension to a holomorphic function on $\mathbb C^N\times \mathbb C^N$. Moreover, it satisfies $E(\mathbf{x},\mathbf{y})=E(\mathbf{y},\mathbf{x})$ for all $\mathbf{x},\mathbf{y} \in \mathbb{C}^N$.

Let $\{e_j\}_{1 \leq j \leq N}$ denote the canonical orthonormal basis in $\mathbb R^N$ and let $T_j=T_{e_j}$. As usual, for every multi-index \hspace{.5mm}$\alpha\hspace{-.5mm}=\hspace{-.5mm}(\alpha_1,\alpha_2,\dots,\alpha_N)\!\in\hspace{-.5mm}\mathbb{N}_0^N=(\mathbb{N} \cup \{0\})^{N}$,
we set \hspace{.25mm}$|\alpha|\!
=\hspace{-.5mm}\sum_{\hspace{.25mm}j=1}^{\hspace{.5mm}N}\hspace{-.25mm}\alpha_j$
and
$$
\partial^{\hspace{.25mm}\alpha}\!
=\partial_{e_1}^{\hspace{.25mm}\alpha_1}\!\circ\hspace{-.25mm}
\partial_{e_2}^{\hspace{.25mm}\alpha_{\hspace{.2mm}2}}
\!\circ\ldots\circ\hspace{-.25mm}
\partial_{e_{N}}^{\hspace{.25mm}\alpha_{\hspace{-.2mm}N}}\,,
$$
where $\{e_1,e_2\hspace{.25mm},\ldots,e_{N}\}$ is the canonical basis of $\mathbb{R}^N$.
The additional subscript $\mathbf{x}$ in $\partial^{\hspace{.25mm}\alpha}_{\mathbf{x}}$
means that the partial derivative \hspace{.25mm}$\partial^{\hspace{.25mm}\alpha}$
is taken with respect to the variable $\mathbf{x}\!\in\!\mathbb{R}^N$. By $\nabla_{\mathbf{x}}f$ we denote the gradient of the function $f$ with respect to the variable $\mathbf{x}$. 
In our further consideration we shall need the following lemma.
\begin{lemma}
For all $\mathbf{x} \in \mathbb{R}^N$, $\mathbf{z} \in \mathbb{C}^N$ and $\nu \in \mathbb{N}_0^{N}$ we have
$$|\partial^{\nu}_{\mathbf{z}}E(\mathbf{x},\mathbf{z})| \leq \|\mathbf{x}\|^{|\nu|}\exp(\|\mathbf{x}\|\|{\rm Re \;}\mathbf{z}\|).$$
In particular, \begin{align*} | E(i\xi, \mathbf x)|\leq 1 \quad \text{ for all } \xi,\mathbf x\in \mathbb R^N.
\end{align*}
\end{lemma}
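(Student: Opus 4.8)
The plan is to deduce both assertions from Rösler's Bochner-type integral representation of the Dunkl kernel. Recall (see \cite{Roesler3},~\cite{Roesler-Voit}) that for every $\mathbf{x}\in\mathbb{R}^N$ there is a probability measure $\mu_{\mathbf{x}}$ on $\mathbb{R}^N$ whose support is compact and contained in the convex hull of the orbit $\{g\mathbf{x}\,:\,g\in G\}$, and such that
\[
E(\mathbf{x},\mathbf{z})=\int_{\mathbb{R}^N}e^{\langle\xi,\mathbf{z}\rangle}\,d\mu_{\mathbf{x}}(\xi)
\qquad\text{for every }\mathbf{z}\in\mathbb{C}^N .
\]
For real $\mathbf{z}$ this is Rösler's positivity theorem; it extends to all $\mathbf{z}\in\mathbb{C}^N$ because both sides are entire in $\mathbf{z}$ — the right-hand side converges locally uniformly, $\mu_{\mathbf{x}}$ having compact support — and agree for $\mathbf{z}\in\mathbb{R}^N$. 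Since each $g\in G$ is an orthogonal transformation, $\|g\mathbf{x}\|=\|\mathbf{x}\|$, so the orbit of $\mathbf{x}$ lies on the sphere of radius $\|\mathbf{x}\|$ and hence $\operatorname{supp}\mu_{\mathbf{x}}\subseteq\overline{B(0,\|\mathbf{x}\|)}$.

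Next I would differentiate under the integral sign. Because $\mu_{\mathbf{x}}$ is supported in the fixed compact ball $\overline{B(0,\|\mathbf{x}\|)}$ and, for $\mathbf{z}$ in a compact subset of $\mathbb{C}^N$, the map $\xi\mapsto\xi^{\nu}e^{\langle\xi,\mathbf{z}\rangle}$ together with its $\mathbf{z}$-derivatives is bounded uniformly on that ball, one may differentiate under the integral (using that $\mathbf{z}\mapsto E(\mathbf{x},\mathbf{z})$ is holomorphic, so the holomorphic derivatives act only on the exponential), obtaining
\[
\partial^{\nu}_{\mathbf{z}}E(\mathbf{x},\mathbf{z})=\int_{\mathbb{R}^N}\xi^{\nu}\,e^{\langle\xi,\mathbf{z}\rangle}\,d\mu_{\mathbf{x}}(\xi).
\]
On $\operatorname{supp}\mu_{\mathbf{x}}$ one has $|\xi^{\nu}|=\prod_{j=1}^{N}|\xi_j|^{\nu_j}\leq\|\xi\|^{|\nu|}\leq\|\mathbf{x}\|^{|\nu|}$, and, since $\xi$ is real, $|e^{\langle\xi,\mathbf{z}\rangle}|=e^{\langle\xi,\operatorname{Re}\mathbf{z}\rangle}\leq e^{\|\xi\|\,\|\operatorname{Re}\mathbf{z}\|}\leq e^{\|\mathbf{x}\|\,\|\operatorname{Re}\mathbf{z}\|}$ by Cauchy--Schwarz. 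As $\mu_{\mathbf{x}}$ is a probability measure, pulling absolute values inside the integral yields $|\partial^{\nu}_{\mathbf{z}}E(\mathbf{x},\mathbf{z})|\leq\|\mathbf{x}\|^{|\nu|}\exp(\|\mathbf{x}\|\,\|\operatorname{Re}\mathbf{z}\|)$. The ``in particular'' part is then immediate: take $\nu=0$ and $\mathbf{z}=i\xi$ with $\xi\in\mathbb{R}^N$, so $\operatorname{Re}\mathbf{z}=0$ and $|E(\mathbf{x},i\xi)|\leq 1$; combining with the symmetry $E(i\xi,\mathbf{x})=E(\mathbf{x},i\xi)$ recorded above gives $|E(i\xi,\mathbf{x})|\leq1$ for all $\xi,\mathbf{x}\in\mathbb{R}^N$.

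I do not foresee a genuine difficulty: once the representation is available, each step is a one-line estimate, and the differentiation under the integral is routine thanks to the compact support. The only non-elementary input is the positivity of the measure $\mu_{\mathbf{x}}$, that is, Rösler's theorem (equivalently, positivity of the Dunkl intertwining operator $V_k$, with $E(\mathbf{x},\mathbf{z})=V_k(e^{\langle\,\cdot\,,\mathbf{z}\rangle})(\mathbf{x})$), which I would quote from the literature rather than reprove. Without it one can still obtain, by elementary power-series or ODE arguments, a bound of the same shape but with an extra constant depending on $G$ and with $\|\operatorname{Re}\mathbf{z}\|$ replaced by $\max_{g\in G}\langle g\mathbf{x},\operatorname{Re}\mathbf{z}\rangle$ — enough for $|E(i\xi,\mathbf{x})|$ bounded by a constant but not for the sharp value $1$ — so routing the proof through Rösler's representation is the natural choice, and the ``main obstacle'', such as it is, amounts to invoking that positivity result correctly.
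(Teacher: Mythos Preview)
Your proof is correct and is precisely the argument behind the result the paper cites: the paper does not give its own proof but simply refers to \cite[Corollary 5.3]{Roesle99}, and that corollary is established exactly via R\"osler's positive integral representation $E(\mathbf{x},\mathbf{z})=\int e^{\langle\eta,\mathbf{z}\rangle}\,d\mu_{\mathbf{x}}(\eta)$ together with the elementary pointwise estimates you wrote down. So your proposal and the paper's (outsourced) proof coincide.
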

\begin{proof}
See~\cite[Corollary 5.3]{Roesle99}.
\end{proof}

\begin{corollary}\label{coro:Roesler}
There is a constant $C>0$ such that for all $\mathbf{x},\xi \in \mathbb{R}^N$ we have
\begin{equation}
|E(i\xi,\mathbf{x})-1| \leq C\|\mathbf{x}\|\|\xi\|.
\end{equation}
\end{corollary}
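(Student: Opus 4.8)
The plan is to deduce the estimate from the preceding lemma by integrating the derivative of $\mathbf z\mapsto E(\mathbf x,\mathbf z)$ along the segment joining $0$ to $i\xi$. First I would record that $E(\mathbf x,0)=1$ for every $\mathbf x\in\mathbb R^N$: by the defining system $T_\eta f=\langle\eta,\mathbf y\rangle f$, $f(0)=1$, we have $E(0,\mathbf y)=1$, and the symmetry $E(\mathbf x,\mathbf y)=E(\mathbf y,\mathbf x)$ gives $E(\mathbf x,0)=E(0,\mathbf x)=1$. Hence, using the symmetry of the kernel once more,
$$E(i\xi,\mathbf x)-1=E(\mathbf x,i\xi)-E(\mathbf x,0).$$

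Next, since $E$ extends to a holomorphic function on $\mathbb C^N\times\mathbb C^N$, the map $t\mapsto E(\mathbf x,it\xi)$ is smooth on $[0,1]$, and the fundamental theorem of calculus yields
$$E(\mathbf x,i\xi)-E(\mathbf x,0)=\int_0^1\frac{d}{dt}E(\mathbf x,it\xi)\,dt=\int_0^1\sum_{j=1}^N i\xi_j\,(\partial_{z_j}E)(\mathbf x,it\xi)\,dt.$$
Then I would apply the lemma with $\nu=e_j$ (so $|\nu|=1$) and $\mathbf z=it\xi$; since $\mathbf z$ is purely imaginary, $\operatorname{Re}\mathbf z=0$ and the exponential factor equals $1$, whence $|(\partial_{z_j}E)(\mathbf x,it\xi)|\leq\|\mathbf x\|$. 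Substituting this bound and using $\sum_{j=1}^N|\xi_j|\leq\sqrt N\,\|\xi\|$ (Cauchy--Schwarz) gives
$$|E(i\xi,\mathbf x)-1|\leq\|\mathbf x\|\sum_{j=1}^N|\xi_j|\leq\sqrt N\,\|\mathbf x\|\,\|\xi\|,$$
so the claim holds with $C=\sqrt N$.

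There is essentially no serious obstacle here; the only points deserving a word of care are the identity $E(\mathbf x,0)=1$ and the legitimacy of differentiating $E$ in its second variable along the path $t\mapsto it\xi$, both of which are immediate from the definition of the Dunkl kernel and its holomorphic extension quoted in the lemma above.
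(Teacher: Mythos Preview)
Your argument is correct and is exactly the natural derivation the paper has in mind: the statement is labeled a corollary of the preceding derivative bound and no separate proof is given, so the intended route is precisely to write $E(i\xi,\mathbf x)-1=E(\mathbf x,i\xi)-E(\mathbf x,0)$, integrate $\partial_{\mathbf z}E(\mathbf x,\cdot)$ along $t\mapsto it\xi$, and apply the lemma with $|\nu|=1$ and $\operatorname{Re}\mathbf z=0$. Your justification of $E(\mathbf x,0)=1$ via the defining system plus symmetry is fine, and the final constant $C=\sqrt N$ is correct.
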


The \textit{Dunkl transform}
  \begin{align*}\mathcal F f(\xi)=c_k^{-1}\int_{\mathbb R^N} E(-i\xi, \mathbf x)f(\mathbf x)\, dw(\mathbf x),
  \end{align*}
  where
  $$c_k=\int_{\mathbb{R}^N}e^{-\frac{\|\mathbf{x}\|^2}{2}}\,dw(\mathbf{x})>0,$$
   originally defined for $f\in L^1(dw)$, is an isometry on $L^2(dw)$, i.e.,
   \begin{equation}\label{eq:Plancherel}
       \|f\|_{L^2(dw)}=\|\mathcal{F}f\|_{L^2(dw)} \text{ for all }f \in L^2(dw),
   \end{equation}
and preserves the Schwartz class of functions $\mathcal S(\mathbb R^N)$ (see \cite{deJeu}). Its inverse $\mathcal F^{-1}$ has the form
  \begin{align*} \mathcal F^{-1} g(\mathbf{x})=c_k^{-1}\int_{\mathbb R^N} E(i\xi, \mathbf x)g(\xi)\, dw(\xi).
  \end{align*}
Moreover,
\begin{equation}\label{eq:der_transform}
    \mathcal{F}(T_j f)(\xi)=i\xi_j\mathcal{F}f(\xi).
\end{equation}
The {\it Dunkl translation\/} $\tau_{\mathbf{x}}f$ of a function $f\in\mathcal{S}(\mathbb{R}^N)$ by $\mathbf{x}\in\mathbb{R}^N$ is defined by
\begin{align*}
\tau_{\mathbf{x}} f(\mathbf{y})=c_k^{-1} \int_{\mathbb{R}^N}{E}(i\xi,\mathbf{x})\,{E}(i\xi,\mathbf{y})\,\mathcal{F}f(\xi)\,{dw}(\xi).
\end{align*}
  It is a contraction on $L^2(dw)$, however it is an open  problem  if the Dunkl translations are bounded operators on $L^p(dw)$ for $p\ne 2$.
  
 The following specific formula was obtained by R\"osler \cite{Roesler2003} for the Dunkl translations of (reasonable) radial functions $f({\mathbf{x}})=\tilde{f}({\|\mathbf{x}\|})$:
\begin{equation}\label{eq:translation-radial}
\tau_{\mathbf{x}}f(-\mathbf{y})=\int_{\mathbb{R}^N}{(\tilde{f}\circ A)}(\mathbf{x},\mathbf{y},\eta)\,d\mu_{\mathbf{x}}(\eta)\text{ for all }\mathbf{x},\mathbf{y}\in\mathbb{R}^N.
\end{equation}
Here
\begin{equation*}
A(\mathbf{x},\mathbf{y},\eta)=\sqrt{{\|}\mathbf{x}{\|}^2+{\|}\mathbf{y}{\|}^2-2\langle \mathbf{y},\eta\rangle}=\sqrt{{\|}\mathbf{x}{\|}^2-{\|}\eta{\|}^2+{\|}\mathbf{y}-\eta{\|}^2}
\end{equation*}
and $\mu_{\mathbf{x}}$ is a probability measure, 
which is supported in the set $\operatorname{conv}\mathcal{O}(\mathbf{x})$,  where $\mathcal O(\mathbf x) =\{\sigma(\mathbf x): \sigma \in G\}$ is the orbit of $\mathbf x$. Let 
$$d(\mathbf x,\mathbf y)=\min_{\sigma\in G}\| \sigma(\mathbf x)-\mathbf y\|$$
be the distance of the orbit of $\mathbf x$ to the orbit of $\mathbf y$. We have the following  elementary estimates (see, e.g., \cite{AS}){, which hold for $\mathbf{x},\mathbf{y}\in\mathbb{R}^N$ and $\eta\in\operatorname{conv}\mathcal{O}(\mathbf{x})$\,:
\begin{align*}
A(\mathbf{x},\mathbf{y},\eta)\ge d(\mathbf{x},\mathbf{y})
\end{align*}
and}
\begin{align*}\begin{cases}
\,{\|}\nabla_{\mathbf{y}}\{A(\mathbf{x},\mathbf{y},\eta)^2\}{\|}\le{2}\,A(\mathbf{x},\mathbf{y},\eta),\\
\,|\hspace{.25mm}\partial^\beta_{\mathbf{y}}\{A(\mathbf{x},\mathbf{y},\eta)^2\}|\le{2}
&{\text{if \,}|\beta|=2\hspace{.25mm},}\\
\,\partial^\beta_{\mathbf{y}}\{A(\mathbf{x},\mathbf{y},\eta)^2\}=0
&{\text{if \,}|\beta|>2\hspace{.25mm}.}
\end{cases}\end{align*}
Hence
\begin{equation}\label{A3}
{\|}\nabla_{\mathbf{y}}A(\mathbf{x},\mathbf{y},\eta){\|}\le{1}.
\end{equation}
  
  {The \textit{Dunkl convolution\/} $f*g$ of two reasonable functions (for instance Schwartz functions) is defined by
\begin{equation}\label{eq:conv_def}
(f*g)(\mathbf{x})=c_k\,\mathcal{F}^{-1}[(\mathcal{F}f)(\mathcal{F}g)](\mathbf{x})=\int_{\mathbb{R}^N}(\mathcal{F}f)(\xi)\,(\mathcal{F}g)(\xi)\,E(\mathbf{x},i\xi)\,dw(\xi) \text{ for }\mathbf{x}\in\mathbb{R}^N,
\end{equation}
or, equivalently, by}
\begin{align*}
  {(f{*}g)(\mathbf{x})=\int_{\mathbb{R}^N}f(\mathbf{y})\,\tau_{\mathbf{x}}g(-\mathbf{y})\,{dw}(\mathbf{y})=\int_{\mathbb R^N} f(\mathbf y)g(\mathbf x,\mathbf y) \,dw(\mathbf{y}) \text{ for all } \mathbf{x}\in\mathbb{R}^N},  
\end{align*}
where, here and subsequently, $g(\mathbf x,\mathbf y)=\tau_{\mathbf x}g(-\mathbf y)$. 

\subsection{Dunkl Laplacian and Dunkl heat semigroup} The {\it Dunkl Laplacian} associated with $R$ and $k$  is the differential-difference operator $\Delta=\sum_{j=1}^N T_{j}^2$, which  acts on $C^2(\mathbb{R}^N)$-functions by\begin{align*}
    \Delta f(\mathbf x)=\Delta_{\rm eucl} f(\mathbf x)+\sum_{\alpha\in R} k(\alpha) \delta_\alpha f(\mathbf x),
\end{align*}
\begin{align*}
    \delta_\alpha f(\mathbf x)=\frac{\partial_\alpha f(\mathbf x)}{\langle \alpha , \mathbf x\rangle} - \frac{\|\alpha\|^2}{2} \frac{f(\mathbf x)-f(\sigma_\alpha \mathbf x)}{\langle \alpha, \mathbf x\rangle^2}.
\end{align*}
Obviously, $\mathcal F(\Delta f)(\xi)=-\| \xi\|^2\mathcal Ff(\xi)$. The operator $\Delta$ is essentially self-adjoint on $L^2(dw)$ (see for instance \cite[Theorem\;3.1]{AH}) and generates the semigroup $H_t$  of linear self-adjoint contractions on $L^2(dw)$. The semigroup has the form
  \begin{align*}
  H_t f(\mathbf x)=\mathcal F^{-1}(e^{-t\|\xi\|^2}\mathcal Ff(\xi))(\mathbf x)=\int_{\mathbb R^N} h_t(\mathbf x,\mathbf y)f(\mathbf y)\, dw(\mathbf y),
  \end{align*}
  where the heat kernel 
  \begin{align*}
      h_t(\mathbf x,\mathbf y)=\tau_{\mathbf x}h_t(-\mathbf y), \ \ h_t(\mathbf x)=\mathcal F^{-1} (e^{-t\|\xi\|^2})(\mathbf x)=c_k^{-1} (2t)^{-\mathbf N\slash 2}e^{-\| \mathbf x\|^2\slash (4t)}
  \end{align*}
  is a $C^\infty$-function of all variables $\mathbf x,\mathbf y \in \mathbb{R}^N$, $t>0$, and satisfies \begin{align*} 0<h_t(\mathbf x,\mathbf y)=h_t(\mathbf y,\mathbf x),
  \end{align*}
 \begin{align*}
    \int_{\mathbb R^N} h_t(\mathbf x,\mathbf y)\, dw(\mathbf y)=1.
 \end{align*}
  We shall need the following estimates for $h_t(\mathbf x,\mathbf y)$ - their two step proof, which is based on R\"osler's formula~\eqref{eq:translation-radial} for the Dunkl  translations of  radial functions (see~\cite{Roesler2003}), can be found in~\cite[Theorem 4.1]{ADzH} and~\cite[Theorem 3.1]{DzH1}. 
\begin{theorem}\label{theorem:heat}
There are constants $C,c>0$ such that for all $\mathbf{x},\mathbf{y} \in \mathbb{R}^N$ and $t>0$ we have
\begin{align*}
h_t(\mathbf{x},\mathbf{y}) \leq C\Big(1+\frac{\|\mathbf{x}-\mathbf{y}\|}{\sqrt{t}}\Big)^{-2}\Big(\max (w(B(\mathbf x,\sqrt{t})),w(B(\mathbf y, \sqrt{t})))\Big)^{-1}\exp\Big(-\frac {cd(\mathbf x,\mathbf y)^2}{t}\Big).
\end{align*}
\end{theorem}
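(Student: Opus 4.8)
The plan is to obtain the estimate directly from R\"osler's product formula \eqref{eq:translation-radial} for the radial heat profile $h_t(\mathbf x)=c_k^{-1}(2t)^{-\mathbf N/2}e^{-\|\mathbf x\|^2/(4t)}$, following the two-step scheme indicated after the statement: first a Gaussian bound carrying the correct volume normalization, then a polynomial improvement in the Euclidean distance. Since $h_t(\mathbf x,\mathbf y)=h_t(\mathbf y,\mathbf x)$, it suffices to prove every intermediate bound with $w(B(\mathbf x,\sqrt t))^{-1}$ alone; the symmetric bound follows by exchanging $\mathbf x$ and $\mathbf y$, and taking the smaller of the two places the larger of $w(B(\mathbf x,\sqrt t))$, $w(B(\mathbf y,\sqrt t))$ in the denominator, i.e. the maximum required in the statement. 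Applying \eqref{eq:translation-radial} with $\tilde f(r)=c_k^{-1}(2t)^{-\mathbf N/2}e^{-r^2/(4t)}$ yields
\begin{equation*}
h_t(\mathbf x,\mathbf y)=c_k^{-1}(2t)^{-\mathbf N/2}\int_{\operatorname{conv}\mathcal O(\mathbf x)}e^{-A(\mathbf x,\mathbf y,\eta)^2/(4t)}\,d\mu_{\mathbf x}(\eta),
\end{equation*}
with $\mu_{\mathbf x}$ a probability measure. Writing $e^{-A^2/(4t)}=e^{-A^2/(8t)}e^{-A^2/(8t)}$ and using $A(\mathbf x,\mathbf y,\eta)\ge d(\mathbf x,\mathbf y)$ in the first factor extracts the Gaussian $e^{-d(\mathbf x,\mathbf y)^2/(8t)}$.

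For the first step there remains the volume bound
\begin{equation*}
(2t)^{-\mathbf N/2}\int e^{-A(\mathbf x,\mathbf y,\eta)^2/(8t)}\,d\mu_{\mathbf x}(\eta)\cleq\frac{1}{w(B(\mathbf x,\sqrt t))}.
\end{equation*}
By the explicit comparison \eqref{eq:balls_asymp} one has $w(B(\mathbf x,\sqrt t))\approx t^{N/2}\prod_{\alpha\in R}(|\langle\mathbf x,\alpha\rangle|+\sqrt t)^{k(\alpha)}$, so after cancelling $t^{-N/2}$ the bound is equivalent to $\int e^{-A^2/(8t)}\,d\mu_{\mathbf x}\cleq\prod_{\alpha\in R}\bigl(\sqrt t\,(|\langle\mathbf x,\alpha\rangle|+\sqrt t)^{-1}\bigr)^{k(\alpha)}$. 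Here one uses the representation $A^2=\|\mathbf x\|^2-\|\eta\|^2+\|\mathbf y-\eta\|^2$ and $\|\eta\|\le\|\mathbf x\|$ for $\eta\in\operatorname{conv}\mathcal O(\mathbf x)$, together with the doubling and growth properties \eqref{eq:doubling}, \eqref{eq:growth}; this is the content of \cite[Theorem 3.1]{DzH1}. I expect the gain of the transverse factors $\prod_\alpha(\sqrt t/(|\langle\mathbf x,\alpha\rangle|+\sqrt t))^{k(\alpha)}$ to be the main obstacle, because $\mu_{\mathbf x}$ is not known explicitly---only that it is a probability measure supported in $\operatorname{conv}\mathcal O(\mathbf x)$---so the gain must be read off from how little mass $\mu_{\mathbf x}$ can place near the reflected points $\sigma_\alpha(\mathbf x)$, equivalently near the walls $\alpha^\perp$. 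This is precisely the regime in which $\|\mathbf x-\mathbf y\|$ is large while $d(\mathbf x,\mathbf y)$ is small, so the same analysis governs both the volume normalisation and the polynomial factor of the statement.

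For the second step I would establish the factor $(1+\|\mathbf x-\mathbf y\|/\sqrt t)^{-2}$ by a dichotomy. If $d(\mathbf x,\mathbf y)\ge\tfrac12\|\mathbf x-\mathbf y\|$, the factor is absorbed by trading part of the Gaussian, since $e^{-cd^2/t}\cleq(1+d/\sqrt t)^{-2}e^{-cd^2/(2t)}\cleq(1+\|\mathbf x-\mathbf y\|/\sqrt t)^{-2}e^{-cd^2/(2t)}$, so the refined bound follows from Step 1. In the complementary regime $d(\mathbf x,\mathbf y)<\tfrac12\|\mathbf x-\mathbf y\|$---where $\mathbf y$ is Euclidean-far from $\mathbf x$ but close to some reflection $\sigma(\mathbf x)$, so that $\|\mathbf x-\mathbf y\|$ is comparable to $|\langle\mathbf x,\alpha\rangle|$ for the relevant $\alpha$---the dominant contribution to R\"osler's integral comes from $\eta$ near $\sigma(\mathbf x)$, and the decay $(\sqrt t/\|\mathbf x-\mathbf y\|)^2$ must again be extracted from the local behaviour of $\mu_{\mathbf x}$. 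Here the derivative bounds on $A(\mathbf x,\mathbf y,\eta)^2$ in $\mathbf y$ recorded above \eqref{A3}---namely $\|\nabla_{\mathbf y}\{A^2\}\|\le2A$, $|\partial^\beta_{\mathbf y}\{A^2\}|\le2$ for $|\beta|=2$, and $\partial^\beta_{\mathbf y}\{A^2\}=0$ for $|\beta|>2$---are the tools controlling the variation of the integrand and hence the concentration of $\mu_{\mathbf x}$; the resulting estimate is carried out in \cite[Theorem 4.1]{ADzH}. Combining the two steps gives the claimed bound.
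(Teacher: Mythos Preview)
The paper does not give its own proof of this theorem: immediately after the statement it simply says that the two-step proof, based on R\"osler's formula~\eqref{eq:translation-radial}, ``can be found in~\cite[Theorem 4.1]{ADzH} and~\cite[Theorem 3.1]{DzH1}.'' Your proposal is an expanded outline of exactly that scheme---R\"osler's integral representation, extraction of the Gaussian in $d(\mathbf x,\mathbf y)$ via $A\ge d$, the volume normalisation of~\cite{DzH1}, and the polynomial refinement in $\|\mathbf x-\mathbf y\|/\sqrt t$ of~\cite{ADzH}---and it defers the substantive estimates to those same references, so you and the paper take the same route.

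One small comment on your sketch: in the second step, the gain of $(1+\|\mathbf x-\mathbf y\|/\sqrt t)^{-2}$ in the hard regime $d(\mathbf x,\mathbf y)\ll\|\mathbf x-\mathbf y\|$ in~\cite[Theorem~4.1]{ADzH} is not obtained via the $\mathbf y$-derivative bounds on $A^2$ that you mention (those are the tools for H\"older-type estimates such as Lemma~\ref{lem:Psi_radial}); rather, it comes from a direct analysis of the R\"osler integral that exploits the decomposition $A(\mathbf x,\mathbf y,\eta)^2=\|\mathbf x\|^2-\|\eta\|^2+\|\mathbf y-\eta\|^2$ and the fact that $\mu_{\mathbf x}$ places little mass near the boundary of $\operatorname{conv}\mathcal O(\mathbf x)$. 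This is a point of detail in the cited argument, not a gap in your plan.
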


Theorem~\ref{theorem:heat} implies the following Lemma (see~\cite[Corollary 3.5]{DzH1}).

\begin{lemma}\label{lem:nonradial_estimation} 
Suppose that $\varphi \in C_c^{\infty}(\mathbb{R}^N)$ is radial and supported by the unit ball $B(0,1)$. Set $\varphi_t(\mathbf{x})=t^{-\mathbf{N}}\varphi(t^{-1}\mathbf{x})$. Then there is $C>0$ such that for all $\mathbf{x},\mathbf{y}\in \mathbb{R}^{N}$ and $t>0$ we have
\begin{align*}
    &|\varphi_t(\mathbf{x},\mathbf{y})| \leq C \Big(1+\frac{\|\mathbf{x}-\mathbf{y}\|}{t}\Big)^{-2}\Big(\max(w(B(\mathbf{x},t)),w(B(\mathbf{y},t)))\Big)^{-1}\chi_{[0,1]}(d(\mathbf{x},\mathbf{y})/t).
\end{align*}
\end{lemma}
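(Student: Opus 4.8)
The statement to prove is Lemma~\ref{lem:nonradial_estimation}, the pointwise kernel bound for $\varphi_t(\mathbf{x},\mathbf{y})=\tau_{\mathbf{x}}\varphi_t(-\mathbf{y})$ where $\varphi$ is radial, smooth, compactly supported in $B(0,1)$. The plan is to write $\varphi_t$ as a superposition of heat kernels and transfer the Gaussian bound of Theorem~\ref{theorem:heat} through R\"osler's formula~\eqref{eq:translation-radial}. Since $\varphi$ is radial with $\tilde{\varphi}$ supported in $[0,1]$, and the Dunkl translation commutes with the functional calculus, the key structural fact I would use is that $\tau_{\mathbf{x}}$ of a radial function is given by the explicit integral against the probability measure $\mu_{\mathbf{x}}$, so
\begin{align*}
\varphi_t(\mathbf{x},\mathbf{y})=\int_{\mathbb{R}^N}(\tilde{\varphi}_t\circ A)(\mathbf{x},\mathbf{y},\eta)\,d\mu_{\mathbf{x}}(\eta),
\end{align*}
where $\tilde{\varphi}_t(s)=t^{-\mathbf{N}}\tilde{\varphi}(s/t)$ and the argument $A(\mathbf{x},\mathbf{y},\eta)$ satisfies $A\ge d(\mathbf{x},\mathbf{y})$ together with the gradient bound~\eqref{A3}.

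\textbf{Key steps.} First I would establish the support/cutoff factor. Because $\tilde{\varphi}$ is supported in $[0,1]$, the integrand vanishes unless $A(\mathbf{x},\mathbf{y},\eta)\le t$; combined with $A(\mathbf{x},\mathbf{y},\eta)\ge d(\mathbf{x},\mathbf{y})$ this forces $d(\mathbf{x},\mathbf{y})\le t$, which produces exactly the factor $\chi_{[0,1]}(d(\mathbf{x},\mathbf{y})/t)$ in the claimed bound. Second, I would handle the size and the $(1+\|\mathbf{x}-\mathbf{y}\|/t)^{-2}$ decay. On the support we have $t^{-\mathbf{N}}|\tilde{\varphi}(A/t)|\le Ct^{-\mathbf{N}}$, and since $\mu_{\mathbf{x}}$ is a probability measure this already gives $|\varphi_t(\mathbf{x},\mathbf{y})|\le Ct^{-\mathbf{N}}$. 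To convert $t^{-\mathbf{N}}$ into the symmetric ball-volume factor $(\max(w(B(\mathbf{x},t)),w(B(\mathbf{y},t))))^{-1}$, I would use the volume-growth estimate~\eqref{eq:balls_asymp}/\eqref{eq:growth}: on the relevant region $d(\mathbf{x},\mathbf{y})\le t$ one has $w(B(\mathbf{x},t))\approx w(B(\mathbf{y},t))$ by doubling applied along the orbit (using $w(\mathcal O(B(\mathbf{x},t)))\le |G|\,w(B(\mathbf{x},t))$), and $t^{-\mathbf{N}}\le C(\max(w(B(\mathbf{x},t)),w(B(\mathbf{y},t))))^{-1}$ follows from the lower bound in~\eqref{eq:balls_asymp} since $w(B(\mathbf{x},t))\gtrsim t^{\mathbf{N}}$ would be false in general---instead one should note $w(B(\mathbf{x},t))\le C t^{N}\prod_\alpha(|\langle\mathbf{x},\alpha\rangle|+t)^{k(\alpha)}$ need not help directly, so the correct route is to quote the already-proven Theorem~\ref{theorem:heat} rather than re-derive volumes from scratch.

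\textbf{The cleanest route and the main obstacle.} The most efficient argument subordinates $\varphi_t$ to the heat kernel directly. Writing $\tilde{\varphi}_t(s)=\int_0^\infty h_u(s)\,\rho_t(u)\,du$ (or more robustly, comparing $\tilde{\varphi}(s/t)$ with a Gaussian $e^{-cs^2/t^2}$ pointwise on its support, which is immediate since $\tilde{\varphi}$ is bounded and supported in $[0,1]$: $|\tilde{\varphi}(s/t)|\le C e^{-c}\,e^{-c s^2/t^2}\cdot\chi_{[0,t]}(s)$ up to adjusting constants), one gets $t^{-\mathbf{N}}|\tilde{\varphi}(A/t)|\le C\,h_{t^2}(A,0)\,\chi_{[0,t]}(A)$ after matching the normalizations of $h_{t^2}$. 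Plugging this into~\eqref{eq:translation-radial} and recognizing the right-hand side as (a constant multiple of) the heat kernel $h_{ct^2}(\mathbf{x},\mathbf{y})$ restricted to the region $d(\mathbf{x},\mathbf{y})\le t$, the desired bound is then an immediate consequence of Theorem~\ref{theorem:heat} with $t$ there replaced by $t^2$. The main obstacle is precisely this matching step: verifying that after the pointwise Gaussian domination the $\eta$-integral against $\mu_{\mathbf{x}}$ reproduces the structure of $h_{ct^2}(\mathbf{x},\mathbf{y})$ with the correct symmetric volume normalization and the $(1+\|\mathbf{x}-\mathbf{y}\|/t)^{-2}$ factor---this requires that the polynomial prefactor $(1+\|\mathbf{x}-\mathbf{y}\|/\sqrt{t^2})^{-2}=(1+\|\mathbf{x}-\mathbf{y}\|/t)^{-2}$ survives the comparison, which it does since on $d(\mathbf{x},\mathbf{y})\le t$ the Euclidean distance $\|\mathbf{x}-\mathbf{y}\|$ and the factor are controlled, and that the compact support in $\tilde{\varphi}$ is what upgrades the mere Gaussian decay of $h$ to the sharp indicator $\chi_{[0,1]}(d(\mathbf{x},\mathbf{y})/t)$. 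I expect the remaining estimates to be routine once this subordination and the support bookkeeping are set up, so I would present them briefly and refer to~\cite[Corollary 3.5]{DzH1} for the parallel computation.
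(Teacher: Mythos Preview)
Your approach is correct and coincides with what the paper does: the paper does not give a self-contained proof of Lemma~\ref{lem:nonradial_estimation} but simply states that it follows from Theorem~\ref{theorem:heat} and refers to \cite[Corollary~3.5]{DzH1}, and your ``cleanest route'' is precisely that derivation. Your exposition meanders a bit (the dead-end in the ``Key steps'' paragraph about converting $t^{-\mathbf N}$ to volumes directly is unnecessary once you commit to the heat-kernel domination), and the ``main obstacle'' you describe is not really an obstacle: since $|\tilde\varphi(s/t)|\le C\,e^{-s^2/(4t^2)}$, plugging into~\eqref{eq:translation-radial} gives $|\varphi_t(\mathbf x,\mathbf y)|\le C\,h_{t^2}(\mathbf x,\mathbf y)$ \emph{exactly}, and then Theorem~\ref{theorem:heat} delivers both the volume factor and the $(1+\|\mathbf x-\mathbf y\|/t)^{-2}$ prefactor in one stroke; the indicator $\chi_{[0,1]}(d(\mathbf x,\mathbf y)/t)$ comes separately from $A\ge d(\mathbf x,\mathbf y)$ as you said.
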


\subsection{Dunkl-Schr\"odinger operator}\label{sec:Schorodinger}
Let $V \geq 0$ be a measurable function such that $V \in L^2_{\rm{loc}}(dw)$. We consider the following operator on the Hilbert space $L^2(dw)$:
\begin{align*}
    \mathcal{L}=-\Delta+V
\end{align*}
with the domain
\begin{align*}
    \mathcal{D}(\mathcal{L})=\{f \in L^2(dw)\,:\,\|\xi\|^2\mathcal{F}f(\xi) \in L^2(dw(\xi)) \text{ and }V(\mathbf{x})f(\mathbf{x}) \in L^2(dw(\mathbf{x}))\}
\end{align*}
(see~\cite{AH}). We call this operator the \textit{Dunkl-Schr\"odinger operator}. Let us define the quadratic form
\begin{align*}
    \mathbf{Q}(f,g)=\sum_{j=1}^{N}\int_{\mathbb{R}^N}T_jf(\mathbf{x})\overline{T_jg(\mathbf{x})}\,dw(\mathbf{x})+\int_{\mathbb{R}^N}V(\mathbf{x})f(\mathbf{x})\overline{g(\mathbf{x})}\,dw(\mathbf{x})
\end{align*}
with the domain
\begin{align*}
    \mathcal{D}(\mathbf{Q})=\left\{f \in L^2(dw)\;:\; \left(\sum_{j=1}^{N}|T_jf|^2\right)^{1/2},V^{1/2}f \in L^2(dw)\right\} .
\end{align*}
The quadratic form is densely defined and closed (see~\cite[Lemma 4.1]{AH}), so there exists a unique positive self-adjoint operator $L$ such that
\begin{align*}
    \langle Lf,f\rangle=\mathbf{Q}(f,f) \text{ for all }f \in \mathcal{D}(L), 
\end{align*}
moreover,
\begin{align*}
    \mathcal{D}(L^{1/2})=\mathcal{D}(\mathbf{Q}) \text{ and }\mathbf{Q}(f,f)=\|L^{1/2}f\|_{L^2(dw)},
\end{align*}
where $L^{1/2}$ is a unique self-adjoint operator such that $(L^{1/2})^2=L$. It was proved in~\cite[Theorem 4.6]{AH}, that $\mathcal{L}$ is essentially self-adjoint on $C^{\infty}_{c}(\mathbb{R}^N)$ and $L$ is its closure.

\subsection{Auxiliary function \texorpdfstring{$m$}{m} and the Fefferman-Phong type inequality}
The results in this subsection are proved in~\cite{Hejna} and they will be used in the proof of Theorem~\ref{teo:main_box}. Some of them are inspired by the corresponding results for classical Schr\"odinger operators (cf. ~\cite{Fefferman} and~\cite{Shen}).

\begin{lemma}[{\cite[Lemma 3.8]{Hejna}}, see also{~\cite[Lemma 1.2]{Shen}}]\label{lem:Rr}
Assume that $V \in {\rm{RH}}^{q}(dw)$, where $q>\max(1,\frac{\mathbf{N}}{2})$, and $V \geq 0$. There is a constant $C \geq 1$ such that for all $\mathbf{x} \in \mathbb{R}^N$ and $0<r_1<r_2<\infty$ we have
\begin{align*}
    \frac{r_1^2}{w(B(\mathbf{x},r_1))}\int_{B(\mathbf{x},r_1)}V(\mathbf{y})\,dw(\mathbf{y}) \leq C \left(\frac{r_1}{r_2}\right)^{\gamma} \frac{r_2^2}{w(B(\mathbf{x},r_2))}\int_{B(\mathbf{x},r_2)}V(\mathbf{y})\,dw(\mathbf{y}). 
\end{align*}
\end{lemma}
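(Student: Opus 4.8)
## Proof Plan for Lemma~\ref{lem:Rr}

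\textbf{The approach.} The plan is to prove the inequality by a dyadic decomposition argument: split the annular regions between $r_1$ and $r_2$ into dyadic shells, bound the integral on each shell using the reverse Hölder inequality \eqref{eq:RevHol}, and sum a geometric series. The key point is that the reverse Hölder condition gives us a power-of-radius gain when we compare averages of $V$ over balls of comparable center but different radii, and the measure-growth estimate \eqref{eq:growth} converts this into the claimed bound. The exponent $\gamma>0$ will come out explicitly in terms of $q$, $\mathbf{N}$, $N$ and $C_{\rm RH}$; the hypothesis $q>\max(1,\frac{\mathbf N}{2})$ is exactly what makes $\gamma$ positive.

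\textbf{Key steps.} First I would reduce to the case $r_2 = 2^n r_1$ for an integer $n\ge 0$ (the general case follows by monotonicity of $r\mapsto \int_{B(\mathbf x,r)}V\,dw$ and adjusting constants, since doubling \eqref{eq:doubling} controls the passage from $r_2$ to the nearest dyadic multiple of $r_1$). Next, for each $j$ with $0\le j\le n$, set $r^{(j)} = 2^j r_1$ and estimate
\begin{align*}
\frac{1}{w(B(\mathbf x,r^{(j)}))}\int_{B(\mathbf x,r^{(j)})} V\,dw
\le \Big(\frac{1}{w(B(\mathbf x,r^{(j)}))}\int_{B(\mathbf x,r^{(j+1)})} V^q\,dw\Big)^{1/q}\, w(B(\mathbf x,r^{(j)}))^{-1+1/q} \cdot w(B(\mathbf x,r^{(j)}))^{1-1/q},
\end{align*}
which by Hölder's inequality and then \eqref{eq:RevHol} applied on $B(\mathbf x, r^{(j+1)})$ (together with doubling to replace $w(B(\mathbf x,r^{(j)}))$ by $w(B(\mathbf x,r^{(j+1)}))$ up to a constant) yields
\begin{align*}
\frac{1}{w(B(\mathbf x,r^{(j)}))}\int_{B(\mathbf x,r^{(j)})} V\,dw
\le C\,\frac{1}{w(B(\mathbf x,r^{(j+1)}))}\int_{B(\mathbf x,r^{(j+1)})} V\,dw.
\end{align*}
Multiplying both sides by $(r^{(j)})^2$ and using $(r^{(j)})^2 = 4^{-1}(r^{(j+1)})^2$ gives a recursion $a_j \le \frac{C}{4}\, a_{j+1}$ where $a_j := (r^{(j)})^2 w(B(\mathbf x,r^{(j)}))^{-1}\int_{B(\mathbf x,r^{(j)})}V\,dw$. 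Wait — this only gains when $C<4$; in general we instead iterate the sharper bound coming from the \emph{full} strength of reverse Hölder. The correct recursion, obtained by using $w(B(\mathbf x,r^{(j)}))/w(B(\mathbf x,r^{(j+1)})) \le C 2^{-N}$ from the lower bound in \eqref{eq:growth}, reads
\begin{align*}
a_j \le C\, 2^{2 - (2 - \frac{\mathbf N}{q})} \Big(\frac{w(B(\mathbf x,r^{(j)}))}{w(B(\mathbf x,r^{(j+1)}))}\Big)^{1-1/q} a_{j+1},
\end{align*}
and since $1-1/q>0$ and the measure ratio is $\lesssim 2^{-N}$ while one also uses $w(B(\mathbf x,r^{(j+1)}))^{1/q}\ge c\,(r^{(j+1)})^{N/q}\cdots$ — the upshot after carefully tracking the exponents is $a_j \le C\, 2^{-\gamma}\, a_{j+1}$ for some $\gamma = \gamma(q,\mathbf N,N,C_{\rm RH})>0$. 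Iterating from $j=0$ to $j=n-1$ gives $a_0 \le C\, 2^{-\gamma n}\, a_n = C\,(r_1/r_2)^{\gamma}\, a_n$, which is the claim.

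\textbf{Main obstacle.} The delicate point is bookkeeping the exponents so that the net power of $2$ in each step of the recursion is strictly negative, i.e.\ verifying that the gain from the measure ratio (lower bound in \eqref{eq:growth}, exponent $N$) combined with the reverse Hölder exponent $1-1/q$ beats the loss of the factor $4 = 2^2$ coming from $(r^{(j+1)})^2/(r^{(j)})^2$. This is precisely where $q > \frac{\mathbf N}{2}$ enters: one needs the improvement exponent in the reverse Hölder self-improvement (equivalently, the $A_\infty$/power-bound structure of $V\,dw$) to dominate, and a clean way to see it is to first upgrade \eqref{eq:RevHol} to a reverse Hölder inequality with a slightly larger exponent $q' > q$ (Gehring's lemma in the doubling space $(\mathbb R^N, dw)$), or alternatively to invoke the known fact that $V\,dw \in B_{\mathbf N/2}$ in Shen's terminology; either route makes the positivity of $\gamma$ transparent. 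Since this is the rational Dunkl analogue of \cite[Lemma~1.2]{Shen}, I expect the argument to follow Shen's line closely, with \eqref{eq:balls_asymp}, \eqref{eq:doubling} and \eqref{eq:growth} substituting for the Euclidean volume identities.
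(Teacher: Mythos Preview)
The paper does not supply a proof of this lemma; it is quoted verbatim from \cite[Lemma~3.8]{Hejna} (cf.\ \cite[Lemma~1.2]{Shen}), so there is no in-paper argument to compare against.

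That said, your plan is more involved than necessary and the exponent bookkeeping goes astray. No dyadic iteration or Gehring self-improvement is needed: the inequality follows in a single step. By H\"older on $B(\mathbf x,r_1)$, the inclusion $B(\mathbf x,r_1)\subset B(\mathbf x,r_2)$, and then the reverse H\"older inequality \eqref{eq:RevHol} on $B(\mathbf x,r_2)$, one gets
\begin{align*}
\frac{1}{w(B(\mathbf x,r_1))}\int_{B(\mathbf x,r_1)}V\,dw
&\le \Big(\frac{1}{w(B(\mathbf x,r_1))}\int_{B(\mathbf x,r_2)}V^q\,dw\Big)^{1/q}\\
&= \Big(\frac{w(B(\mathbf x,r_2))}{w(B(\mathbf x,r_1))}\Big)^{1/q}\Big(\frac{1}{w(B(\mathbf x,r_2))}\int_{B(\mathbf x,r_2)}V^q\,dw\Big)^{1/q}\\
&\le C_{\rm RH}\Big(\frac{w(B(\mathbf x,r_2))}{w(B(\mathbf x,r_1))}\Big)^{1/q}\frac{1}{w(B(\mathbf x,r_2))}\int_{B(\mathbf x,r_2)}V\,dw.
\end{align*}
Multiplying both sides by $r_1^2=(r_1/r_2)^2 r_2^2$ and bounding the measure ratio by the \emph{upper} inequality in \eqref{eq:growth} (exponent $\mathbf N$) yields the lemma with $\gamma = 2-\mathbf N/q$, which is positive exactly because $q>\mathbf N/2$.

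The source of the muddle in your recursion is that you reach for the lower bound in \eqref{eq:growth} (exponent $N$); but the ratio $w(B(\mathbf x,r_2))/w(B(\mathbf x,r_1))$ must be estimated from \emph{above} here, and only the $\mathbf N$-bound does that. With the correct bound the role of the hypothesis $q>\mathbf N/2$ is transparent, and no appeal to Gehring's lemma or to Shen's $B_{\mathbf N/2}$ formalism is required.
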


\begin{lemma}[{\cite[Lemma 4.1]{Hejna}}, see also{~\cite[Lemma 1.4]{Shen}}]\label{lem:m_growth}
Assume that $V \in {\rm{RH}}^{q}(dw)$, where $q>\max(1,\frac{\mathbf{N}}{2})$, and $V \geq 0$. There are constants $C,\kappa>0$ such that for all $\mathbf{x},\mathbf{y} \in \mathbb{R}^N$ we have
\begin{align*}
    C^{-1}m(\mathbf{y}) \leq m(\mathbf{x}) \leq Cm(\mathbf{y}) \text { if }\|\mathbf{x}-\mathbf{y}\|<m(\mathbf{x})^{-1},
\end{align*}
\begin{align*}
    m(\mathbf{y}) \leq Cm(\mathbf{x})(1+m(\mathbf{x})\|\mathbf{x}-\mathbf{y}\|)^{\kappa},
\end{align*}
\begin{align*}
    m(\mathbf{y}) \geq C^{-1}m(\mathbf{x})(1+m(\mathbf{x})\|\mathbf{x}-\mathbf{y}\|)^{-\frac{\kappa}{1+\kappa}}.
\end{align*}
\end{lemma}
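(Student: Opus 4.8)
Write $\rho(\mathbf x)=m(\mathbf x)^{-1}$ and $\psi(\mathbf x,r)=\frac{r^{2}}{w(B(\mathbf x,r))}\int_{B(\mathbf x,r)}V\,dw$, so that \eqref{eq:m} together with the continuity of $r\mapsto\psi(\mathbf x,r)$ gives the defining identity $\psi(\mathbf x,\rho(\mathbf x))=1$. The plan is first to read off from Lemma~\ref{lem:Rr} the two \emph{calibrations}
\begin{align*}
\psi(\mathbf x,s)\le C\,(s/\rho(\mathbf x))^{\gamma}\quad(s\le\rho(\mathbf x)),\qquad
\psi(\mathbf x,s)\ge C^{-1}(s/\rho(\mathbf x))^{\gamma}\quad(s\ge\rho(\mathbf x)),
\end{align*}
which follow by taking $r_2=\rho(\mathbf x)$, resp.\ $r_1=\rho(\mathbf x)$, in Lemma~\ref{lem:Rr} and using $\psi(\mathbf x,\rho(\mathbf x))=1$. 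These immediately yield two \emph{conversions}: if $\psi(\mathbf y,s)\le A$ then $\rho(\mathbf y)\ge c(A)\,s$, and if $\psi(\mathbf y,s)\ge A$ for $A$ larger than the calibration constant then $\rho(\mathbf y)\le s$. Everything below is phrased through $w$-measures of ordinary balls, so the Dunkl-specific structure (orbit distance, reflections) never enters: the only geometric inputs are doubling \eqref{eq:doubling} and the growth bound \eqref{eq:growth}.

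Next I would dispose of the short-range regime $\|\mathbf x-\mathbf y\|\le\tfrac12\rho(\mathbf x)$, which already gives the first assertion there and feeds the others. For the bound $m(\mathbf y)\le Cm(\mathbf x)$ I use $B(\mathbf y,\rho(\mathbf x)/2)\subseteq B(\mathbf x,\rho(\mathbf x))$, so that $\int_{B(\mathbf y,\rho(\mathbf x)/2)}V\le\int_{B(\mathbf x,\rho(\mathbf x))}V=w(B(\mathbf x,\rho(\mathbf x)))/\rho(\mathbf x)^{2}$; doubling \eqref{eq:doubling} turns this into $\psi(\mathbf y,\rho(\mathbf x)/2)\le C$, whence $\rho(\mathbf y)\ge c\,\rho(\mathbf x)$ by the first conversion. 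The reverse inequality is symmetric: $B(\mathbf x,\rho(\mathbf x))\subseteq B(\mathbf y,2\rho(\mathbf x))$ gives $1=\psi(\mathbf x,\rho(\mathbf x))\le C\,\psi(\mathbf y,2\rho(\mathbf x))$, and the second calibration forces $\rho(\mathbf y)\le C\rho(\mathbf x)$. Only balls of radius comparable to $\rho(\mathbf x)$ enter here, so no control of $\psi$ at large scales is required.

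The analytic core is the third displayed inequality (a lower bound for $m(\mathbf y)$, i.e.\ an upper bound for $\rho(\mathbf y)$) in the long-range regime $R:=\|\mathbf x-\mathbf y\|>\tfrac12\rho(\mathbf x)$, and this is where I expect the main obstacle to lie. Since $2R\ge\rho(\mathbf x)$, the \emph{lower} calibration at $\mathbf x$ gives $\psi(\mathbf x,2R)\ge c\,(R/\rho(\mathbf x))^{\gamma}$, i.e.\ a lower bound for the $V$-mass $\int_{B(\mathbf x,2R)}V$; as $B(\mathbf x,2R)\subseteq B(\mathbf y,3R)$ and these balls are $w$-comparable by \eqref{eq:doubling}, the same lower bound holds for $\int_{B(\mathbf y,3R)}V$. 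The delicate point is to transport this mass estimate from scale $3R$ down to a fine scale $r_{0}$ about $\mathbf y$; Lemma~\ref{lem:Rr} alone only bounds $\psi$ \emph{from above} at smaller radii, so it cannot do this. For the downward transport I would invoke the self-improving ($A_\infty$) property of reverse H\"older weights on the space of homogeneous type $(\mathbb R^{N},dw,\|\cdot\|)$ implied by \eqref{eq:RevHol}: there are constants $C,\varepsilon>0$ so that for concentric balls $B'\subseteq B$ one has $\int_{B'}V\,dw\ge c\,(w(B')/w(B))^{1/\varepsilon}\int_{B}V\,dw$. Applying this with $B=B(\mathbf y,3R)$, $B'=B(\mathbf y,r_{0})$ and controlling the measure ratio by the growth bound \eqref{eq:growth} gives $\psi(\mathbf y,r_{0})\ge c\,(R/\rho(\mathbf x))^{\gamma}(r_{0}/R)^{\theta}$ with $\theta=2+\mathbf N(1/\varepsilon-1)$; choosing $r_{0}\simeq\rho(\mathbf x)\,(R/\rho(\mathbf x))^{\kappa/(1+\kappa)}$ with $\kappa=(\theta-\gamma)/\gamma$ makes the right-hand side a large constant, and the second conversion yields $\rho(\mathbf y)\le C\,r_{0}$, which is exactly the asserted lower bound for $m(\mathbf y)$.

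The remaining two assertions are then formal. The second displayed inequality, $m(\mathbf y)\le Cm(\mathbf x)(1+m(\mathbf x)\|\mathbf x-\mathbf y\|)^{\kappa}$, follows from the lower bound just proved by interchanging $\mathbf x$ and $\mathbf y$ in it and solving $m(\mathbf x)\ge C^{-1}m(\mathbf y)(1+m(\mathbf y)\|\mathbf x-\mathbf y\|)^{-\kappa/(1+\kappa)}$ for $m(\mathbf y)$, distinguishing the cases $m(\mathbf y)\|\mathbf x-\mathbf y\|\le1$ and $>1$; each is a one-line computation (in the second case one uses $1+m(\mathbf y)\|\mathbf x-\mathbf y\|\le 2m(\mathbf y)\|\mathbf x-\mathbf y\|$ and isolates the power $m(\mathbf y)^{1/(1+\kappa)}$). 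Finally the first assertion results by specializing both polynomial bounds to the range $\|\mathbf x-\mathbf y\|<m(\mathbf x)^{-1}$, where the factor $(1+m(\mathbf x)\|\mathbf x-\mathbf y\|)^{\pm}$ is pinched between $1$ and $2^{\kappa}$, so that $m(\mathbf y)$ and $m(\mathbf x)$ are comparable.
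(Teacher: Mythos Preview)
The paper does not prove this lemma: it is quoted verbatim from \cite[Lemma~4.1]{Hejna} (see the sentence introducing Subsection~2.4), so there is no in-paper argument to compare against. Your sketch is essentially correct and follows the Shen template, with one structural twist worth flagging. In \cite{Shen} (and, one expects, in \cite{Hejna}) the order is reversed: one first proves the \emph{second} inequality directly, using that $V\in{\rm RH}^{q}(dw)$ forces $V\,dw$ to be doubling, hence $\int_{B(\mathbf x,2^{j}\rho(\mathbf x))}V\,dw\le C_{0}^{j}\int_{B(\mathbf x,\rho(\mathbf x))}V\,dw$; this gives an \emph{upper} bound for $\psi(\mathbf y,R)$ when $R\sim\|\mathbf x-\mathbf y\|$, and the first conversion yields $\rho(\mathbf y)\ge c\,\rho(\mathbf x)(1+R/\rho(\mathbf x))^{-\kappa}$, i.e.\ the second displayed inequality. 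The third then follows by swapping $\mathbf x\leftrightarrow\mathbf y$ and solving. You instead attack the third inequality head-on, invoking the $A_\infty$ lower bound $\int_{B'}V\,dw\ge c\,(w(B')/w(B))^{1/\varepsilon}\int_{B}V\,dw$ to push mass \emph{down} in scale, and then derive the second by the swap. The two routes are logically equivalent (the $A_\infty$ lower bound and the doubling of $V\,dw$ are the same fact read in opposite directions), and your algebra with $\theta=2+\mathbf N(1/\varepsilon-1)$ and $\kappa=(\theta-\gamma)/\gamma$ checks out once one notes that $\varepsilon<1$ (which holds because ${\rm RH}^{q}\subset A_\infty$ on the space of homogeneous type $(\mathbb R^{N},\|\cdot\|,dw)$). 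One small point to tidy: your choice of $r_0$ must satisfy $r_0\le 3R$ for $B'\subseteq B$; this can fail when $R/\rho(\mathbf x)$ is merely $>1/2$ but bounded, so you should absorb that near-range case into the short-range argument (with a slightly larger threshold) before running the $A_\infty$ step.
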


For a cube $Q \subset \mathbb{R}^N$, here and subsequently, let $d(Q)$ denote the side-length of cube $Q$. We denote by $Q^{*}$ the cube with the same center as $Q$ such that $d(Q^{*})=2d(Q)$. We define a collection of dyadic cubes $\mathcal{Q}$ associated with the potential $V$ by the following stopping-time condition:
\begin{equation}\label{eq:stopping_time}
    Q \in \mathcal{Q} \iff Q \text{ is the maximal dyadic cube for which }\frac{d(Q)^2}{w(Q)}\int_{Q} V(\mathbf{y})\,dw(\mathbf{y}) \leq 1
\end{equation}
(see~\cite[(4.7)]{Hejna}). It is well-defined (see the comment below~\cite[(4.7)]{Hejna} for details) and it forms a covering of $\mathbb{R}^N$ built from dyadic cubes which have disjoint interiors.

\begin{fact}[{\cite[Fact 4.3]{Hejna}}]\label{fact:m_d_compare}
Assume that $V \in {\rm{RH}}^{q}(dw)$, where $q>\max(1,\frac{\mathbf{N}}{2})$, and $V \geq 0$. There is a constant $C>0$ such that for any $Q \in \mathcal{Q}$ and $\mathbf{x} \in Q^{****}$ we have
\begin{align*}
    C^{-1}d(Q)^{-1} \leq m(\mathbf{x}) \leq Cd(Q)^{-1}.
\end{align*}
\end{fact}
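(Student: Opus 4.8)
\emph{Proof strategy.} The plan is to reduce the whole statement to the center $\mathbf c_Q$ of the cube and then spread the estimate to all of $Q^{****}$ using the slow variation of $m$ from Lemma~\ref{lem:m_growth}. Throughout I write $\ell=d(Q)$ and $\psi(\mathbf x,r)=\frac{r^2}{w(B(\mathbf x,r))}\int_{B(\mathbf x,r)}V\,dw$, so that by \eqref{eq:m} one has $m(\mathbf x)^{-1}=\sup\{r>0: \psi(\mathbf x,r)\le 1\}$. The two features of $\psi$ I will lean on are that it is continuous in $r$ and that, by Lemma~\ref{lem:Rr}, it is essentially increasing: $\psi(\mathbf x,r_1)\le C(r_1/r_2)^{\gamma}\psi(\mathbf x,r_2)$ for $r_1<r_2$. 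Consequently, to bound $m(\mathbf x)^{-1}$ from below it suffices to exhibit one radius $r$ with $\psi(\mathbf x,r)\le 1$, and to bound it from above it suffices to show $\psi(\mathbf x,r)>1$ for every sufficiently large $r$. I will also use the elementary inclusions $B(\mathbf c_Q,\ell/2)\subseteq Q\subseteq B(\mathbf c_Q,\tfrac{\sqrt N}{2}\ell)$ and $\widehat Q\subseteq B(\mathbf c_Q,2\sqrt N\ell)$, where $\widehat Q$ is the dyadic parent of $Q$, together with doubling~\eqref{eq:doubling} to identify $w(Q)$, $w(\widehat Q)$ and the various $w(B(\mathbf c_Q,c\ell))$ up to constants.

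Step one is to prove $m(\mathbf c_Q)\sim \ell^{-1}$. For the lower bound on $m(\mathbf c_Q)^{-1}$ I would use the stopping-time inequality \eqref{eq:stopping_time} for $Q$ itself, namely $\int_Q V\,dw\le w(Q)\ell^{-2}$; since $B(\mathbf c_Q,\ell/2)\subseteq Q$ and $w(Q)\lesssim w(B(\mathbf c_Q,\ell/2))$ by doubling, this gives $\psi(\mathbf c_Q,\ell/2)\lesssim 1$, and Lemma~\ref{lem:Rr} then produces a constant $c_1$ with $\psi(\mathbf c_Q,c_1\ell)\le 1$, whence $m(\mathbf c_Q)^{-1}\ge c_1\ell$. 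For the upper bound I would use the \emph{failure} of the stopping condition on the parent $\widehat Q$, i.e. $\int_{\widehat Q}V\,dw> w(\widehat Q)(2\ell)^{-2}\ge w(Q)(4\ell^2)^{-1}$. Because $\widehat Q\subseteq B(\mathbf c_Q,2\sqrt N\ell)$ and $w(B(\mathbf c_Q,2\sqrt N\ell))\lesssim w(Q)$ by doubling, this yields $\psi(\mathbf c_Q,2\sqrt N\ell)\ge c''$ for some positive constant $c''$; applying Lemma~\ref{lem:Rr} in the increasing direction gives $\psi(\mathbf c_Q,r)>1$ for all $r> C_2\ell$, hence $m(\mathbf c_Q)^{-1}\le C_2\ell$. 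Together these give $m(\mathbf c_Q)\sim d(Q)^{-1}$.

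Step two is to transfer the estimate from $\mathbf c_Q$ to an arbitrary $\mathbf x\in Q^{****}$. Since $Q^{****}$ is the concentric cube of side $16\ell$, one has $\|\mathbf x-\mathbf c_Q\|\le 8\sqrt N\,\ell$, so that $m(\mathbf c_Q)\|\mathbf x-\mathbf c_Q\|$ is bounded above by an absolute constant by Step one. The second and third inequalities of Lemma~\ref{lem:m_growth} then sandwich $m(\mathbf x)$ between two fixed multiples of $m(\mathbf c_Q)$, because the factors $(1+m(\mathbf c_Q)\|\mathbf x-\mathbf c_Q\|)^{\pm}$ are comparable to $1$. This gives $m(\mathbf x)\sim m(\mathbf c_Q)\sim d(Q)^{-1}$, which is the assertion.

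The main obstacle is the mismatch between the threshold $1$ defining $m$ and the merely \emph{constant} upper and lower bounds for $\psi$ that the stopping-time condition delivers at the natural scale $r\sim d(Q)$. The role of Lemma~\ref{lem:Rr} is precisely to bridge this gap: its essential monotonicity lets me slide the radius slightly---down for the lower bound, up for the upper bound---to convert ``$\psi\lesssim 1$'' and ``$\psi\gtrsim 1$'' into the genuine statements $\psi\le 1$ and $\psi>1$ from which $m$ is read off. Routing the argument through $\mathbf c_Q$ and invoking Lemma~\ref{lem:m_growth} also sidesteps the only other delicate point, namely that a ball $B(\mathbf x,r)$ with $\mathbf x$ far out in $Q^{****}$ could meet neighbouring stopping cubes of a priori unknown size. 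The remaining ingredients---the ball/cube inclusions and the doubling comparisons of $w(Q)$, $w(\widehat Q)$ and the concentric balls---are routine in this space of homogeneous type; one only has to check that all constants depend solely on $R$, $N$, $q$, $k$ and $C_{\rm RH}$, which they do since they come from~\eqref{eq:doubling}, Lemma~\ref{lem:Rr} and Lemma~\ref{lem:m_growth}.
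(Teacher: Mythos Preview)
Your proposal is correct. The present paper does not prove this fact at all; it is quoted verbatim from \cite[Fact 4.3]{Hejna}, so there is no in-paper argument to compare against. Your route---pin down $m(\mathbf c_Q)\sim d(Q)^{-1}$ by combining the stopping-time inequality \eqref{eq:stopping_time} on $Q$ with its failure on the dyadic parent $\widehat Q$, use Lemma~\ref{lem:Rr} to convert ``$\psi\lesssim 1$'' and ``$\psi\gtrsim 1$'' at scale $\sim d(Q)$ into genuine threshold crossings, and then spread the estimate to $Q^{****}$ via Lemma~\ref{lem:m_growth}---is the standard one and matches what the cited reference does.
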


\begin{proposition}[{\cite[Proposition 4.4]{Hejna}}]\label{propo:F}
Assume that $V \in {\rm{RH}}^{q}(dw)$, where $q>\max(1,\frac{\mathbf{N}}{2})$, and $V \geq 0$. The covering $\mathcal{Q}$ defined by~\eqref{eq:stopping_time} satisfies the following finite overlapping condition:
\begin{equation}\label{eq:finite_overlapping_new}
     \left(\exists C_0>0\right)\left(\forall Q_1,Q_2 \in \mathcal{Q}\right)\, Q_1^{****} \cap Q_2^{****}\neq \emptyset \Rightarrow C_0^{-1}d(Q_1) \leq d(Q_2) \leq C_0 d(Q_1).
\end{equation}
\end{proposition}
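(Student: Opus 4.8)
The plan is to prove the finite overlapping property \eqref{eq:finite_overlapping_new} by combining the comparison between $m(\mathbf x)$ and $d(Q)^{-1}$ from Fact~\ref{fact:m_d_compare} with the local doubling behavior of the auxiliary function $m$ from Lemma~\ref{lem:m_growth}. The key observation is that if $Q_1^{****}\cap Q_2^{****}\neq\emptyset$, then picking a point $\mathbf x$ in the intersection we have, by Fact~\ref{fact:m_d_compare} applied to $Q_1$, $C^{-1}d(Q_1)^{-1}\le m(\mathbf x)\le Cd(Q_1)^{-1}$, and simultaneously, applied to $Q_2$, $C^{-1}d(Q_2)^{-1}\le m(\mathbf x)\le Cd(Q_2)^{-1}$. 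Chaining these two double inequalities through the common quantity $m(\mathbf x)$ immediately yields $C^{-2}d(Q_1)^{-1}\le d(Q_2)^{-1}\le C^2 d(Q_1)^{-1}$, i.e. $C_0^{-1}d(Q_1)\le d(Q_2)\le C_0 d(Q_1)$ with $C_0=C^2$. So in fact Fact~\ref{fact:m_d_compare} does most of the work.

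The one subtlety I would want to nail down is that the point $\mathbf x$ in the intersection $Q_1^{****}\cap Q_2^{****}$ lies in the set where Fact~\ref{fact:m_d_compare} is valid for \emph{both} cubes. Fact~\ref{fact:m_d_compare} is stated for $\mathbf x\in Q^{****}$ for $Q\in\mathcal Q$, so any $\mathbf x\in Q_1^{****}\cap Q_2^{****}$ qualifies for both $Q_1$ and $Q_2$; no enlargement is needed, and the argument goes through verbatim. If instead one had only intersections of smaller dilates in hand, one would first enlarge using that the $Q_i^{****}$ are comparable to a fixed multiple of $Q_i$, but here that is unnecessary.

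The main obstacle — to the extent there is one — is purely bookkeeping: one must be careful that the constant $C$ in Fact~\ref{fact:m_d_compare} is a single fixed constant independent of the cube, which it is (it depends only on $R$, $N$, $q$, $k$, and $C_{\rm RH}$). Then the resulting $C_0$ depends only on the same data, as required. I would write the proof as a short two-line chain of inequalities, explicitly: fix $\mathbf x\in Q_1^{****}\cap Q_2^{****}$, write $d(Q_2)\le C m(\mathbf x)^{-1}\le C^2 d(Q_1)$ and symmetrically $d(Q_1)\le C^2 d(Q_2)$, and set $C_0=C^2$. No heat kernel estimates, reverse Hölder inequalities, or the growth estimates of Lemma~\ref{lem:m_growth} are actually needed for this particular statement once Fact~\ref{fact:m_d_compare} is granted; those tools are what power the proof of Fact~\ref{fact:m_d_compare} itself upstream.
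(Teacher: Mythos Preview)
Your argument is correct: once Fact~\ref{fact:m_d_compare} is available, picking $\mathbf{x}\in Q_1^{****}\cap Q_2^{****}$ and chaining the two double inequalities through $m(\mathbf{x})$ gives $C_0^{-1}d(Q_1)\le d(Q_2)\le C_0 d(Q_1)$ with $C_0=C^2$, exactly as you wrote. There is no circularity risk, since Fact~\ref{fact:m_d_compare} concerns a single cube and is proved (in the cited source) from the stopping-time definition together with Lemma~\ref{lem:Rr} and Lemma~\ref{lem:m_growth}, without any appeal to the overlapping condition.

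Note, however, that the present paper does not give its own proof of Proposition~\ref{propo:F}; it is quoted from \cite[Proposition~4.4]{Hejna}, so there is no in-text argument here to compare against. Your derivation is in fact the standard route in that reference and in the classical Shen--Fefferman framework: one first establishes $d(Q)^{-1}\sim m(\mathbf{x})$ on $Q^{****}$ (Fact~\ref{fact:m_d_compare}) and then reads off the comparability of side-lengths for overlapping dilates. So your proposal matches the intended approach.
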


\begin{lemma}[{\cite[Lemma 5.3]{Hejna}}]\label{lem:general_Leibniz}
For all  $j \in \{1,2,\ldots,N\}$, $g \in C^\infty_{c}(\mathbb{R}^N)$, and $f \in L^2(dw)$ such that its weak Dunkl derivative $T_jf$ is in $L^2(dw)$ we have $T_j(fg) \in L^2(dw)$. Moreover,
\begin{align*}
    T_j(fg)(\mathbf{x})=(T_jf)(\mathbf{x})g(\mathbf{x})+f(\mathbf{x})\partial_{j}g(\mathbf{x})+\sum_{\alpha \in R}\frac{k(\alpha)}{2} \alpha_j f(\sigma_{\alpha}(\mathbf{x}))\frac{g(\mathbf{x})-g(\sigma_{\alpha}(\mathbf{x}))}{\langle \mathbf{x}, \alpha \rangle}
\end{align*}
in $L^2(dw)$-sense.
\end{lemma}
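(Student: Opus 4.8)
The plan is to establish the identity first for smooth $f$ by a direct computation and then to remove the regularity hypothesis by approximating $f$ with the Dunkl heat semigroup, invoking the closedness of the weak Dunkl derivative $T_j$ on $L^2(dw)$. For the smooth case, recalling that $T_jh(\mathbf{x})=\partial_jh(\mathbf{x})+\sum_{\alpha\in R}\frac{k(\alpha)}{2}\alpha_j\frac{h(\mathbf{x})-h(\sigma_\alpha(\mathbf{x}))}{\langle\alpha,\mathbf{x}\rangle}$, I would apply the ordinary product rule to the differential part and split the difference part by
\[
f(\mathbf{x})g(\mathbf{x})-f(\sigma_\alpha(\mathbf{x}))g(\sigma_\alpha(\mathbf{x}))=g(\mathbf{x})\bigl(f(\mathbf{x})-f(\sigma_\alpha(\mathbf{x}))\bigr)+f(\sigma_\alpha(\mathbf{x}))\bigl(g(\mathbf{x})-g(\sigma_\alpha(\mathbf{x}))\bigr).
\]
Dividing by $\langle\alpha,\mathbf{x}\rangle$, summing over $\alpha\in R$ and regrouping then yields exactly $(T_jf)g+f\,\partial_jg$ together with the stated reflected sum, proving the formula whenever $f\in C^1$.

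Next I would check that, for $f\in L^2(dw)$ with $T_jf\in L^2(dw)$ and $g\in C_c^\infty(\mathbb{R}^N)$, each summand on the right-hand side lies in $L^2(dw)$. The terms $(T_jf)g$ and $f\,\partial_jg$ are immediate, as $g$ and $\partial_jg$ are bounded with compact support. For the reflected sum the crucial point is that, since $\sigma_\alpha(\mathbf{x})=\mathbf{x}-\langle\mathbf{x},\alpha\rangle\alpha$ (because $\|\alpha\|^2=2$), the fundamental theorem of calculus gives
\[
\frac{g(\mathbf{x})-g(\sigma_\alpha(\mathbf{x}))}{\langle\mathbf{x},\alpha\rangle}=\int_0^1\bigl\langle\nabla g\bigl(\mathbf{x}-t\langle\mathbf{x},\alpha\rangle\alpha\bigr),\alpha\bigr\rangle\,dt,
\]
so this quotient is bounded and supported in any ball $B(0,\rho)\supset\supp g$ (using that $\sigma_\alpha$ is an isometry, so $\sigma_\alpha(B(0,\rho))=B(0,\rho)$). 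Writing the reflected factor as $f(\sigma_\alpha(\cdot))$ times this bounded, compactly supported quotient and changing variables $\mathbf{x}\mapsto\sigma_\alpha(\mathbf{x})$, which preserves $dw$ by its $G$-invariance, shows the term lies in $L^2(dw)$. Hence the whole right-hand side, call it $h$, belongs to $L^2(dw)$.

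Finally I would run the approximation with $f_n=H_{1/n}f$. By the smoothing property of the Dunkl heat semigroup $f_n\in C^\infty$, so $f_ng\in C_c^\infty(\mathbb{R}^N)$ and the smooth identity applies to $f_ng$. Since $T_j$ is a Dunkl-transform multiplier it commutes with $H_t$, whence $T_jf_n=H_{1/n}(T_jf)$; as $H_t$ is a strongly continuous contraction semigroup on $L^2(dw)$, both $f_n\to f$ and $T_jf_n\to T_jf$ in $L^2(dw)$. Using the estimates of the previous paragraph---for the reflected term, $\int_{B(0,\rho)}|(f_n-f)(\sigma_\alpha(\mathbf{x}))|^2\,dw(\mathbf{x})=\int_{B(0,\rho)}|f_n-f|^2\,dw\to0$ by $G$-invariance---the right-hand side evaluated at $f_n$ converges in $L^2(dw)$ to $h$, while $f_ng\to fg$ in $L^2(dw)$. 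Since the weak Dunkl derivative is closed on $L^2(dw)$ (its graph closes upon passing to the limit in the skew-symmetry relation $\int(T_jf_n)\phi\,dw=-\int f_n(T_j\phi)\,dw$, $\phi\in C_c^\infty$), it follows that $fg\in\mathcal{D}(T_j)$ and $T_j(fg)=h$, which is the asserted formula.

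I expect the main obstacle to be the reflected sum: proving that $f(\sigma_\alpha(\cdot))\frac{g-g\circ\sigma_\alpha}{\langle\cdot,\alpha\rangle}$ genuinely lies in $L^2(dw)$ and converges along the approximation, which hinges on the cancellation of the $\langle\cdot,\alpha\rangle^{-1}$ singularity on $\alpha^\perp$ and on the invariance of $dw$ under the reflections $\sigma_\alpha$.
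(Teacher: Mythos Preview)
The paper does not give its own proof of this lemma: it merely quotes it from \cite[Lemma~5.3]{Hejna} and uses it as a black box. There is therefore nothing in the present paper to compare your argument against.

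That said, your proposal is correct. The pointwise Leibniz computation for smooth $f$ is the standard one and yields exactly the displayed identity. The mean-value representation
\[
\frac{g(\mathbf{x})-g(\sigma_\alpha(\mathbf{x}))}{\langle\mathbf{x},\alpha\rangle}=\int_0^1\bigl\langle\nabla g\bigl(\mathbf{x}-t\langle\mathbf{x},\alpha\rangle\alpha\bigr),\alpha\bigr\rangle\,dt
\]
does show that this quotient is bounded and supported in any origin-centered ball containing $\operatorname{supp}g$; combined with the $G$-invariance of $dw$ this handles the reflected term in $L^2(dw)$, which is indeed the only delicate point. The approximation by $f_n=H_{1/n}f$ is legitimate: $f_n\in C^\infty\cap L^2(dw)$, the commutation $T_jf_n=H_{1/n}(T_jf)$ follows from \eqref{eq:der_transform} on the Dunkl transform side, and strong continuity of the semigroup gives $f_n\to f$, $T_jf_n\to T_jf$ in $L^2(dw)$. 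Closedness of the weak operator $T_j$ is immediate from its distributional definition via the skew-symmetry $\int(T_j u)\phi\,dw=-\int u\,T_j\phi\,dw$. Your outline would constitute a complete proof once written out; it is in fact the natural argument one would expect in the cited source.
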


The following lemma is inspired by its counterpart for fractional laplacian~\cite[Lemma 9.6]{DZ_Studia}.

\begin{lemma}[{\cite[Lemma 5.5]{Hejna}}]\label{lem:gradient}
Assume that $V \in {\rm{RH}}^{q}(dw)$, where $q>\max(1,\frac{\mathbf{N}}{2})$, and $V \geq 0$. There is a constant $C>0$ such that for all $j\in \{1,\ldots,N\}$, $f \in L^2(dw)$ such that its weak Dunkl derivative $T_jf$ is in $L^2(dw)$, and $Q \in \mathcal{Q}$ we have
\begin{align*}
    \|T_j(f\phi_{Q})\|_{L^2(dw)} \leq C\left(\left(\int_{Q^{*}}|T_jf(\mathbf{x})|^2\,dw(\mathbf{x})\right)^{1/2}+\left(\int_{\mathcal{O}(Q^{*})}|f(\mathbf{x})|^2m(\mathbf{x})^2\,dw(\mathbf{x})\right)^{1/2}\right).
\end{align*}
\end{lemma}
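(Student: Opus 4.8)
The plan is to apply the Leibniz-type formula from Lemma~\ref{lem:general_Leibniz} with $g=\phi_Q$ and then estimate each of the three resulting terms separately in $L^2(dw)$. Here $\phi_Q$ should be a smooth cutoff adapted to the cube $Q$: supported in $Q^{*}$, equal to $1$ on $Q$, with $\|\nabla \phi_Q\|_\infty \lesssim d(Q)^{-1}$; by Fact~\ref{fact:m_d_compare} we have $d(Q)^{-1}\sim m(\mathbf{x})$ uniformly on $Q^{*}$ (indeed on $Q^{****}$), which is the bridge that converts all the metric $d(Q)^{-1}$ factors into the weight $m(\mathbf{x})$ appearing on the right-hand side.

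First I would handle the easy terms. The term $(T_jf)(\mathbf{x})\phi_Q(\mathbf{x})$ is bounded in $L^2(dw)$ by $\|(T_jf)\chi_{Q^{*}}\|_{L^2(dw)}$ since $0\le \phi_Q\le 1$ and $\supp \phi_Q\subset Q^{*}$, giving the first term on the right. The term $f(\mathbf{x})\partial_j\phi_Q(\mathbf{x})$ is supported in $Q^{*}$ and bounded pointwise by $\|\nabla\phi_Q\|_\infty |f(\mathbf{x})|\lesssim d(Q)^{-1}|f(\mathbf{x})|\chi_{Q^{*}}(\mathbf{x})\lesssim m(\mathbf{x})|f(\mathbf{x})|\chi_{Q^{*}}(\mathbf{x})$, using Fact~\ref{fact:m_d_compare}; this is dominated by the second term on the right since $Q^{*}\subset\mathcal{O}(Q^{*})$.

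The main obstacle is the reflection term $\sum_{\alpha\in R}\frac{k(\alpha)}{2}\alpha_j\, f(\sigma_\alpha(\mathbf{x}))\frac{\phi_Q(\mathbf{x})-\phi_Q(\sigma_\alpha(\mathbf{x}))}{\langle\mathbf{x},\alpha\rangle}$. Fix $\alpha\in R$. I would split $\mathbb{R}^N$ according to whether $|\langle\mathbf{x},\alpha\rangle|$ is large or small relative to $d(Q)$. When $|\langle\mathbf{x},\alpha\rangle|\gtrsim d(Q)$ one crudely bounds the difference quotient by $\frac{|\phi_Q(\mathbf{x})|+|\phi_Q(\sigma_\alpha(\mathbf{x}))|}{|\langle\mathbf{x},\alpha\rangle|}\lesssim d(Q)^{-1}\big(\chi_{Q^{*}}(\mathbf{x})+\chi_{Q^{*}}(\sigma_\alpha(\mathbf{x}))\big)$; the contribution where $\mathbf{x}\in Q^{*}$ is $\lesssim d(Q)^{-1}|f(\sigma_\alpha(\mathbf{x}))|\chi_{Q^{*}}(\mathbf{x})$, and after the change of variables $\mathbf{x}\mapsto\sigma_\alpha(\mathbf{x})$ (which preserves $dw$), using that $\sigma_\alpha$ is an isometry so $\|\mathbf{x}-\sigma_\alpha(\mathbf{y})\|$-type quantities and $m$ are comparable on the reflected set (by Lemma~\ref{lem:m_growth} together with Fact~\ref{fact:m_d_compare}, or simply because $\sigma_\alpha(Q^{*})\subset\mathcal{O}(Q^{*})$ and $m(\sigma_\alpha(\mathbf{x}))=m(\mathbf{x})$ when $V$ is $G$-invariant — wait, $V$ need not be $G$-invariant, so one uses instead Fact~\ref{fact:m_d_compare} on $\sigma_\alpha(Q^{*})\subset\mathcal{O}(Q^{*})$ if $\sigma_\alpha(Q)$ meets a cube of $\mathcal{Q}$ of comparable size via Proposition~\ref{propo:F}, or just the crude bound $m(\mathbf{x})\gtrsim d(Q)^{-1}$ on $\mathcal{O}(Q^{*})$), this piece is absorbed into $\big(\int_{\mathcal{O}(Q^{*})}|f|^2 m^2\,dw\big)^{1/2}$. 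Symmetrically for the $\sigma_\alpha(\mathbf{x})\in Q^{*}$ piece. When $|\langle\mathbf{x},\alpha\rangle|\lesssim d(Q)$, the mean value inequality gives $\big|\phi_Q(\mathbf{x})-\phi_Q(\sigma_\alpha(\mathbf{x}))\big|\le \|\nabla\phi_Q\|_\infty\|\mathbf{x}-\sigma_\alpha(\mathbf{x})\| = \|\nabla\phi_Q\|_\infty\,\frac{2|\langle\mathbf{x},\alpha\rangle|}{\|\alpha\|^2}$, so the singular denominator cancels and the difference quotient is $\lesssim d(Q)^{-1}$, supported (after accounting for both $\phi_Q(\mathbf{x})$ and $\phi_Q(\sigma_\alpha(\mathbf{x}))$ being possibly nonzero) in the set $\{\mathbf{x}:\ \mathbf{x}\in Q^{*}\ \text{or}\ \sigma_\alpha(\mathbf{x})\in Q^{*}\}$; one again controls this by $d(Q)^{-1}|f(\sigma_\alpha(\mathbf{x}))|$ on that set and repeats the change-of-variables argument to land inside $\int_{\mathcal{O}(Q^{*})}|f|^2 m^2\,dw$. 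Summing over the finitely many $\alpha\in R$ and using $|\alpha_j|\le\|\alpha\|=\sqrt2$ and $k(\alpha)<\infty$ completes the estimate. The delicate point throughout is bookkeeping the supports after reflection and invoking Fact~\ref{fact:m_d_compare} (and, if needed, Proposition~\ref{propo:F}) to see that $m\sim d(Q)^{-1}$ on all the relevant reflected neighborhoods contained in $\mathcal{O}(Q^{*})$.
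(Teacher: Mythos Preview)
The paper does not prove this lemma here; it is quoted from \cite{Hejna}. Your overall strategy---apply the Leibniz formula of Lemma~\ref{lem:general_Leibniz} and estimate the three pieces---is the natural one, and the two local terms $(T_jf)\phi_Q$ and $f\,\partial_j\phi_Q$ are handled correctly. The split of the reflection term according to whether $|\langle\mathbf x,\alpha\rangle|$ is small or large compared with $d(Q)$ is also the right move, and your treatment of the ``near'' regime is fine: there $\|\mathbf x-\sigma_\alpha(\mathbf x)\|\lesssim d(Q)$, so both points fall into $Q^{****}$ and Fact~\ref{fact:m_d_compare} applies directly.

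The gap is in the ``far'' regime. After bounding the quotient by $d(Q)^{-1}$ and changing variables, you need $d(Q)^{-1}\lesssim m(\mathbf y)$ for $\mathbf y\in\sigma_\alpha(Q^{*})$, and you offer three justifications. None of them works: $m$ is \emph{not} $G$-invariant in general (as you yourself note); Proposition~\ref{propo:F} only compares cubes whose enlargements $Q^{****}$ actually meet, and there is no reason $Q^{****}$ should touch the cubes covering $\sigma_\alpha(Q^{*})$; and the asserted ``crude bound $m\gtrsim d(Q)^{-1}$ on $\mathcal O(Q^{*})$'' is simply false---Lemma~\ref{lem:m_growth} shows $m$ can decay by a power of the distance, so on a reflected copy far from $Q$ it can be much smaller than $d(Q)^{-1}$.

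The fix is to \emph{not} throw away $|\langle\mathbf x,\alpha\rangle|^{-1}$ in the far regime. Keep the bound $|\phi_Q(\mathbf x)|/|\langle\mathbf x,\alpha\rangle|$ in the piece with $\mathbf x\in Q^{*}$; after the change of variables $\mathbf y=\sigma_\alpha(\mathbf x)\in\sigma_\alpha(Q^{*})\subset\mathcal O(Q^{*})$ the task becomes $|\langle\mathbf y,\alpha\rangle|^{-1}\lesssim m(\mathbf y)$. This \emph{does} follow from Lemma~\ref{lem:m_growth}: since $\sigma_\alpha(\mathbf y)\in Q^{*}$, Fact~\ref{fact:m_d_compare} gives $m(\sigma_\alpha(\mathbf y))\sim d(Q)^{-1}$, and the third inequality in Lemma~\ref{lem:m_growth} yields
\[
|\langle\mathbf y,\alpha\rangle|\,m(\mathbf y)\;\gtrsim\; t\,(1+t)^{-\kappa/(1+\kappa)},\qquad t:=\frac{|\langle\mathbf y,\alpha\rangle|}{d(Q)}\gtrsim 1,
\]
which is bounded below since the exponent $1-\tfrac{\kappa}{1+\kappa}=\tfrac{1}{1+\kappa}>0$. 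The symmetric piece with $\sigma_\alpha(\mathbf x)\in Q^{*}$ is easier, as after the change of variables the integral already lives on $Q^{*}$ where Fact~\ref{fact:m_d_compare} applies directly.
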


The next theorem is a version of Fefferman--Phong inequality (\cite[p. 146]{Fefferman}, see also Shen ~\cite{Shen2},~\cite[Lemma 1.9]{Shen}). It is the main result of~\cite{Hejna}.
\begin{theorem}[Fefferman--Phong type inequality, see {\cite[Theorem 1.1]{Hejna}}]\label{theo:Fefferman-Phong}
Assume that $V \in {\rm{RH}}^{q}(dw)$, where $q>\max(1,\frac{\mathbf{N}}{2})$, and $V \geq 0$. There is a constant $C>0$, which depends on $R$, $k$, $N$, $q$, and $C_{{\rm RH}}$, such that for all $f \in \mathcal{D}(\mathbf{Q})$ we have
\begin{equation}\label{eq:F-Ph}
    \int_{\mathbb{R}^N}|f(\mathbf{x})|^2m(\mathbf{x})^2\,dw(\mathbf{x}) \leq C \mathbf{Q}(f,f).
\end{equation}
\end{theorem}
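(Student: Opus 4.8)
The plan is to prove the pointwise mass inequality \eqref{eq:F-Ph} by localizing to the dyadic covering $\mathcal{Q}$ defined in \eqref{eq:stopping_time} and exploiting the comparison $m(\mathbf{x})\sim d(Q)^{-1}$ on $Q^{****}$ from Fact~\ref{fact:m_d_compare}. On each cube $Q\in\mathcal{Q}$ the potential is, by construction, small in the scale-invariant sense $\tfrac{d(Q)^2}{w(Q)}\int_Q V\,dw\le 1$, so the strategy is to bound $\int_Q |f|^2 m^2\,dw \sim d(Q)^{-2}\int_Q |f|^2\,dw$ by a local energy $\int_{Q^*}(\sum_j|T_jf|^2)\,dw$ plus a small multiple of $\int_{Q^*}V|f|^2\,dw$, and then sum over $Q$ using the finite-overlap property of Proposition~\ref{propo:F}.

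\textbf{Key steps.} First I would fix a single $Q\in\mathcal{Q}$, set $r=d(Q)$, and prove a local Poincar\'e-type estimate: there is a constant so that
\begin{equation*}
d(Q)^{-2}\int_{Q}|f(\mathbf{x})|^2\,dw(\mathbf{x})\le C\left(\int_{Q^{*}}\sum_{j=1}^{N}|T_jf(\mathbf{x})|^2\,dw(\mathbf{x})+\int_{Q^{*}}V(\mathbf{x})|f(\mathbf{x})|^2\,dw(\mathbf{x})\right).
\end{equation*}
The mechanism is to introduce a smooth cutoff $\phi_Q$ adapted to $Q$ (equal to $1$ on $Q$, supported in $Q^*$, with $\|\nabla\phi_Q\|_\infty\lesssim d(Q)^{-1}$) and apply a Dunkl Poincar\'e inequality to $f\phi_Q$; the gradient term produced is controlled by Lemma~\ref{lem:gradient}, which converts $\|T_j(f\phi_Q)\|_{L^2(dw)}$ into local $T_jf$-energy plus a mass term $\int_{\mathcal{O}(Q^*)}|f|^2m^2\,dw$. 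The reflected region $\mathcal{O}(Q^*)$ is handled because $\mathcal{O}$ preserves the cube structure and $w(\mathcal{O}(B))\le |G|\,w(B)$. Second, I would upgrade the $L^2$-Poincar\'e bound on the right-hand side so that the mass term appearing via Lemma~\ref{lem:gradient} is absorbed: here one uses $V\in{\rm RH}^q(dw)$ together with Lemma~\ref{lem:Rr} and a Sobolev/H\"older argument on $Q^*$ to trade the integral $\int_{Q^*}V|f|^2\,dw$ for a small constant times the $L^2$-mass $d(Q)^{-2}\int_{Q^*}|f|^2\,dw$ plus a large multiple of the gradient energy, exploiting that $\tfrac{d(Q)^2}{w(Q)}\int_Q V\,dw\le 1$ makes $V$ subcritical at scale $d(Q)$. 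Third, using Fact~\ref{fact:m_d_compare} to replace $d(Q)^{-2}$ by $m(\mathbf{x})^2$ on $Q$, I would sum the local inequalities over all $Q\in\mathcal{Q}$. Because $\{Q\}$ tile $\mathbb{R}^N$ with disjoint interiors while the enlargements $Q^*$ (and their orbits $\mathcal{O}(Q^*)$) have bounded overlap by Proposition~\ref{propo:F}, the summed right-hand side is $\le C\,\mathbf{Q}(f,f)$, giving \eqref{eq:F-Ph}.

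\textbf{Reduction to $C_c^\infty$.} Since $\mathcal{L}$ is essentially self-adjoint on $C_c^\infty(\mathbb{R}^N)$ and $\mathcal{D}(\mathbf{Q})=\mathcal{D}(L^{1/2})$, it suffices to prove \eqref{eq:F-Ph} for $f\in C_c^\infty(\mathbb{R}^N)$ and then extend by density, noting both sides of the inequality are continuous in the graph norm of $L^{1/2}$; Fatou's lemma handles the mass term on the left when passing to the limit.

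\textbf{Main obstacle.} The delicate point will be the absorption step, i.e.\ bounding $\int_{Q^*}V|f|^2\,dw$ by a \emph{small} constant times $d(Q)^{-2}\int_{Q^*}|f|^2\,dw$ plus gradient energy. The naive H\"older estimate using $V\in{\rm RH}^q$ produces the correct scaling only if $q>\mathbf{N}/2$, which is exactly why that hypothesis is imposed: one needs the Dunkl Sobolev embedding $\dot{W}^{1,2}(dw)\hookrightarrow L^{2q'}(dw)$ on the cube at scale $d(Q)$ with the right homogeneous dimension $\mathbf{N}$, and the exponent $2q'$ must be subcritical, forcing $q'<\mathbf{N}/(\mathbf{N}-2)$, i.e.\ $q>\mathbf{N}/2$. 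Making the Sobolev constant scale correctly in $d(Q)$ while keeping the overlap constants uniform, and ensuring the reflected contributions $\mathcal{O}(Q^*)$ introduced by Lemma~\ref{lem:gradient} do not spoil the summation, is where the analysis must be carried out carefully.
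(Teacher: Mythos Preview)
The paper does not give a proof of this theorem: it is quoted from \cite[Theorem 1.1]{Hejna} and used as a black box, so there is no ``paper's own proof'' to compare your proposal against. I can only assess the internal logic of your sketch.

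There is a genuine circularity in your plan. You propose to control the local mass $d(Q)^{-2}\int_Q|f|^2\,dw$ by applying a Poincar\'e inequality to $f\phi_Q$ and then invoking Lemma~\ref{lem:gradient} to estimate $\|T_j(f\phi_Q)\|_{L^2(dw)}$. But the right-hand side of Lemma~\ref{lem:gradient} already contains the term $\int_{\mathcal{O}(Q^*)}|f(\mathbf x)|^2 m(\mathbf x)^2\,dw(\mathbf x)$, which is (a piece of) exactly the quantity \eqref{eq:F-Ph} you are trying to bound. Summing over $Q\in\mathcal{Q}$ via Proposition~\ref{propo:F} yields, on the right, a constant $C$ times $\int_{\mathbb R^N}|f|^2m^2\,dw$ with no smallness in $C$, so the term cannot be absorbed into the left. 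Note also that $\mathcal{O}(Q^*)$ need not lie inside $Q^{****}$, so you cannot simply replace $m(\mathbf x)^2$ by $d(Q)^{-2}$ on that set via Fact~\ref{fact:m_d_compare}.

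Your ``absorption'' paragraph compounds the confusion. You write that you will ``trade the integral $\int_{Q^*}V|f|^2\,dw$ for a small constant times $d(Q)^{-2}\int_{Q^*}|f|^2\,dw$ plus a large multiple of the gradient energy''. That inequality goes the wrong way: $\int V|f|^2\,dw$ is already part of $\mathbf{Q}(f,f)$ and belongs on the \emph{right} of \eqref{eq:F-Ph}; it is $d(Q)^{-2}\int|f|^2$ (equivalently $\int m^2|f|^2$) that must be bounded \emph{by} $\int V|f|^2+\int\sum_j|T_jf|^2$, not the reverse. The mechanism you describe---Sobolev embedding with exponent dictated by $q>\mathbf N/2$---is relevant, but it is used in the opposite direction: one shows that the potential term $\int_{Q^*}V|f|^2\,dw$ is \emph{large} enough (thanks to the maximality in the stopping time \eqref{eq:stopping_time}, which forces $d(Q)^2w(Q)^{-1}\int_Q V\,dw\sim 1$) to dominate the mean of $|f|^2$, after the oscillation of $f$ has been stripped off by a local Poincar\'e inequality. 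Your outline never invokes the maximality of $Q$ in \eqref{eq:stopping_time}, which is the source of the needed lower bound on $V$ at scale $d(Q)$ and is the heart of the Fefferman--Phong argument.
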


\section{H\"older bounds for Dunkl translation of radial function}

The next lemma is a version of~\cite[Theorem 4.1 (b)]{ADzH} with the Dunkl heat kernel replaced by the Dunkl translation of radial $C^{\infty}_{c}(\mathbb{R}^N)$-function. Its proof is similar to the proof of~\cite[Theorem 4.1 (b)]{ADzH} and it is based on R\"osler's formula~\eqref{eq:translation-radial}.
\begin{lemma}\label{lem:Psi_radial}
Let $\varphi \in C_c^{\infty}(\mathbb{R}^N)$ be a radial function supported by the unit ball, that is, $\varphi(\mathbf{x})=\widetilde{\varphi}(\|\mathbf{x}\|)$, where $\widetilde \varphi \in C_c^\infty(-1,1)$ is even.   There is a constant $C>0$ such that for all $\mathbf{x},\mathbf{y},\mathbf{z} \in \mathbb{R}^{N}$ and $t>0$ such that $\|\mathbf{y}-\mathbf{z}\| < t$ we have
\begin{align*}
&|\varphi_t(\mathbf{x},\mathbf{y})-\varphi_t(\mathbf{x},\mathbf{z})|\leq C\frac{\|\mathbf{y}-\mathbf{z}\|}{t}\Big(\max(w(B(\mathbf{x},t)),w(B(\mathbf{y},t)))\Big)^{-1}\chi_{[0,2]}(d(\mathbf{x},\mathbf{y})/t).
\end{align*}
\end{lemma}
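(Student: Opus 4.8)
The plan is to start from R\"osler's formula~\eqref{eq:translation-radial}, which gives
\begin{align*}
\varphi_t(\mathbf{x},\mathbf{y})=\tau_{\mathbf{x}}\varphi(-\mathbf{y})=\int_{\operatorname{conv}\mathcal{O}(\mathbf{x})}(\widetilde{\varphi_t}\circ A)(\mathbf{x},\mathbf{y},\eta)\,d\mu_{\mathbf{x}}(\eta),
\end{align*}
so that, since $\mu_{\mathbf{x}}$ is a probability measure,
\begin{align*}
|\varphi_t(\mathbf{x},\mathbf{y})-\varphi_t(\mathbf{x},\mathbf{z})|\le\sup_{\eta\in\operatorname{conv}\mathcal{O}(\mathbf{x})}\big|\widetilde{\varphi_t}(A(\mathbf{x},\mathbf{y},\eta))-\widetilde{\varphi_t}(A(\mathbf{x},\mathbf{z},\eta))\big|.
\end{align*}
Here $\widetilde{\varphi_t}(s)=t^{-\mathbf{N}}\widetilde{\varphi}(s/t)$ is the radial profile of $\varphi_t$. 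The first step is to bound each term in the integrand by the mean value theorem: writing $a=A(\mathbf{x},\mathbf{y},\eta)$ and $b=A(\mathbf{x},\mathbf{z},\eta)$, we have $|\widetilde{\varphi_t}(a)-\widetilde{\varphi_t}(b)|\le |a-b|\sup_{s}|\widetilde{\varphi_t}'(s)|\le Ct^{-\mathbf{N}-1}|a-b|$, and then control $|a-b|$ by the gradient estimate~\eqref{A3}, namely $\|\nabla_{\mathbf{y}}A(\mathbf{x},\mathbf{y},\eta)\|\le 1$, which along the segment from $\mathbf{z}$ to $\mathbf{y}$ gives $|A(\mathbf{x},\mathbf{y},\eta)-A(\mathbf{x},\mathbf{z},\eta)|\le\|\mathbf{y}-\mathbf{z}\|$. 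Combining, the integrand is at most $Ct^{-\mathbf{N}-1}\|\mathbf{y}-\mathbf{z}\|$, which after integrating against $\mu_{\mathbf{x}}$ yields the bound $C\frac{\|\mathbf{y}-\mathbf{z}\|}{t}\cdot t^{-\mathbf{N}}$.

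The remaining issue is to upgrade the crude $t^{-\mathbf{N}}$ factor to the sharper $(\max(w(B(\mathbf{x},t)),w(B(\mathbf{y},t))))^{-1}$, together with the support cutoff $\chi_{[0,2]}(d(\mathbf{x},\mathbf{y})/t)$. For the cutoff: the derivative $\widetilde{\varphi_t}'(s)$ is supported in $|s|<t$, so the integrand vanishes unless either $a<t$ or $b<t$; since $\|\mathbf{y}-\mathbf{z}\|<t$ and $A(\mathbf{x},\cdot,\eta)$ is $1$-Lipschitz, $b<t$ forces $a<2t$, so in all cases we may restrict to the region where $A(\mathbf{x},\mathbf{y},\eta)<2t$ (possibly passing through the identity $A(\mathbf{x},\mathbf{y},\eta)\ge d(\mathbf{x},\mathbf{y})$, which then gives $d(\mathbf{x},\mathbf{y})<2t$, i.e.\ $d(\mathbf{x},\mathbf{y})/t<2$). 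For the measure factor: one argues exactly as in the proof of~\cite[Theorem 4.1 (b)]{ADzH} and Lemma~\ref{lem:nonradial_estimation}. The relevant point is that on the region $d(\mathbf{x},\mathbf{y})\lesssim t$ one has, by the doubling property~\eqref{eq:doubling} and~\eqref{eq:balls_asymp} together with the fact that $\mathbf{y}$ lies within distance $O(t)$ of the orbit of $\mathbf{x}$, that $w(B(\mathbf{x},t))\approx w(B(\mathbf{y},t))$; one also uses the second identity for $A^2$, namely $A(\mathbf{x},\mathbf{y},\eta)^2=\|\mathbf{x}\|^2-\|\eta\|^2+\|\mathbf{y}-\eta\|^2$, to localize $\eta$ near $\mathbf{y}$ when $A<2t$, and then estimates $\int d\mu_{\mathbf{x}}$ over that localized set by $C\,t^{\mathbf{N}}/w(B(\mathbf{x},t))$ using the sub-Gaussian/volumetric bounds on $\mu_{\mathbf{x}}$ that underlie the heat kernel estimate in Theorem~\ref{theorem:heat}. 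Concretely, I would mimic the two-step scheme of~\cite[Theorem 4.1]{ADzH}: reduce to radial estimates via~\eqref{eq:translation-radial}, then invoke the already-established volume bounds for $\mu_{\mathbf{x}}$.

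The main obstacle is the last step: getting the correct measure factor $(\max(w(B(\mathbf{x},t)),w(B(\mathbf{y},t))))^{-1}$ rather than simply $t^{-\mathbf{N}}$. This is precisely where the nontrivial information about the support and size of the R\"osler measure $\mu_{\mathbf{x}}$ enters, and it is the same difficulty that appears in proving Theorem~\ref{theorem:heat} and Lemma~\ref{lem:nonradial_estimation}. Since Lemma~\ref{lem:nonradial_estimation} already packages exactly this volumetric estimate for $|\varphi_t(\mathbf{x},\mathbf{y})|$ itself, the cleanest route is to run the difference estimate in parallel with it: replace the bound $|\widetilde{\varphi_t}(A)|\le Ct^{-\mathbf{N}}\chi_{[0,1]}(A/t)$ (used to prove Lemma~\ref{lem:nonradial_estimation}) by $|\widetilde{\varphi_t}(a)-\widetilde{\varphi_t}(b)|\le Ct^{-\mathbf{N}-1}\|\mathbf{y}-\mathbf{z}\|\,\chi_{[0,2]}(a/t)$, and then carry the same integration-against-$\mu_{\mathbf{x}}$ argument verbatim, which automatically produces the claimed right-hand side with the enlarged cutoff $\chi_{[0,2]}$ in place of $\chi_{[0,1]}$. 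The decay factor $(1+\|\mathbf{x}-\mathbf{y}\|/t)^{-2}$ present in Lemma~\ref{lem:nonradial_estimation} is simply dropped here (it is $\le 1$), which is why it does not appear in the statement.
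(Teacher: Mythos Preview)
Your approach is correct and follows essentially the same route as the paper: R\"osler's formula~\eqref{eq:translation-radial}, the mean value theorem along the segment from $\mathbf{z}$ to $\mathbf{y}$, the gradient bound~\eqref{A3}, and then Lemma~\ref{lem:nonradial_estimation} to obtain the correct volume factor and cutoff. The only organizational difference is that the paper dominates $|\widetilde{\varphi}'|$ by another even function $\widetilde{\boldsymbol\phi}\in C_c^\infty(-1,1)$, so that after integrating over $s$ the bound becomes $C\,\dfrac{\|\mathbf{y}-\mathbf{z}\|}{t}\int_0^1\boldsymbol{\phi}_t(\mathbf{x},\mathbf{y}_s)\,ds$ for $\boldsymbol{\phi}(\mathbf{x})=\widetilde{\boldsymbol\phi}(\|\mathbf{x}\|)$, and then applies Lemma~\ref{lem:nonradial_estimation} as a black box to $\boldsymbol{\phi}_t(\mathbf{x},\mathbf{y}_s)$; this avoids reopening the proof of Lemma~\ref{lem:nonradial_estimation} and the volumetric estimates on $\mu_{\mathbf{x}}$ that you propose to rerun, but the substance is the same.
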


\begin{proof}
The presence of the factor $\chi_{[0,2]}(d(\mathbf{x},\mathbf{y})/t)$ follows by Lemma~\ref{lem:nonradial_estimation}. For $s \in [0,1]$ we set $\mathbf{y}_s=\mathbf{z}+s(\mathbf{y}-\mathbf{z})$. By~\eqref{eq:translation-radial} we obtain
\begin{equation}\label{eq:Roesler-formula_1}
\begin{split}
   \varphi_t(\mathbf{x},\mathbf{y})-\varphi_t(\mathbf{x},\mathbf{z})&=t^{-\mathbf{N}}\int_{\mathbb{R}^N} \widetilde{\varphi}(A(\mathbf{x},\mathbf{y},\eta)/t)-\widetilde{\varphi}(A(\mathbf{x},\mathbf{z},\eta)/t)\,d\mu_{\mathbf{x}}(\eta)\\&=t^{-\mathbf{N}}\int_{\mathbb{R}^N}\,\int_0^1 \frac{d}{ds}\widetilde{\varphi}(A(\mathbf{x},\mathbf{y}_s,\eta))/t)\,ds\, d\mu_{\mathbf{x}}(\eta).
\end{split}
\end{equation}
Clearly, there is an even function $\widetilde{\boldsymbol \phi}\in C_c^\infty(-1,1)$, such that $|\widetilde{\varphi}'(x)|\leq \widetilde{\boldsymbol \phi}(x)$ for all $x \in \mathbb{R}$. 
Hence, by Cauchy--Schwarz inequality,~\eqref{A3}, and~\eqref{eq:Roesler-formula_1},  we obtain
\begin{equation}\label{eq:Roesler-formula_2}
    \begin{split}
        \Big|\frac{d}{ds}\widetilde{\varphi}(A(\mathbf{x},\mathbf{y}_s,\eta))/t)\Big|&=\big|\widetilde{\varphi}'(A(\mathbf{x},\mathbf{y}_s,\eta))/t)\big|\cdot t^{-1}\cdot \big|\langle \nabla_{\mathbf{y}_s}A(\mathbf{x},\mathbf{y}_s,\eta),2(\mathbf{y}-\mathbf{z})\rangle\big |
        \\&\leq C \frac{\|\mathbf{y}-\mathbf{z}\|}{t}{\widetilde{\boldsymbol \phi}(A(\mathbf{x},\mathbf{y}_s,\eta)/t)} .
    \end{split}
\end{equation}
Set ${\boldsymbol \phi}(\mathbf x)={\widetilde{\boldsymbol \phi}}(\| \mathbf x\|)$.  Combining~\eqref{eq:Roesler-formula_1} and~\eqref{eq:Roesler-formula_2} we obtain
\begin{equation}\label{eq:Roesler-formula_3}
    \begin{split}
        |\varphi_t(\mathbf{x},\mathbf{y})-\varphi_t(\mathbf{x},\mathbf{z})| &\leq Ct^{-\mathbf{N}} \frac{\|\mathbf{y}-\mathbf{z}\|}{t}\int_{\mathbb{R}^N} \int_0^1 {\widetilde{\boldsymbol \phi}(A(\mathbf{x},\mathbf{y}_s,\eta)/t)}\,ds\,d\mu_{\mathbf{x}}(\eta)\\
        &=C \frac{\|\mathbf{y}-\mathbf{z}\|}{t}\int_0^1{\boldsymbol \phi}_t(\mathbf x,\mathbf y_s)\,ds.
    \end{split}
\end{equation}
Finally, applying Lemma \ref{lem:nonradial_estimation} and using the assumption $\|\mathbf{y}-\mathbf{z}\|<t$ we get
\begin{align*}
    {\boldsymbol \phi}_t(\mathbf x,\mathbf y_s)\leq C\Big(1+\frac{\| \mathbf x-\mathbf y_s\|}{t}\Big)^{-2}\Big(\max(w(B(\mathbf{x},t)),w(B(\mathbf{y}_s,t)))\Big)^{-1}\chi_{[0,2]}(d(\mathbf{x},\mathbf{y}_s)/t), 
\end{align*}
which, together with~\eqref{eq:Roesler-formula_3} and~\eqref{eq:doubling}, gives the claim.
\end{proof}

\section{Proof of Theorem~\ref{teo:main_box}}

\begin{definition}\label{def:resolution}\normalfont
By the smooth resolution of identity $\{\phi_{Q}\}_{Q \in \mathcal{Q}}$ associated with $\mathcal{Q}$ (see~\eqref{eq:stopping_time}) we mean the collection of $C^{\infty}$-functions on $\mathbb{R}^N$ such that $\supp \phi_{Q} \subseteq Q^{*}$, $0 \leq \phi_Q(\mathbf{x}) \leq 1$,
\begin{equation}\label{eq:partition_derivative}
    |\partial^{\alpha}\phi_{Q}(\mathbf{x})| \leq C_{\alpha}d(Q)^{-|\alpha|} \text{ for all }\alpha \in \mathbb{N}_0^{N},
\end{equation}
and $\sum_{Q \in \mathcal{Q}}\phi_Q(\mathbf{x})=1$ for all $\mathbf{x} \in \mathbb{R}^N$. The collection $\{\phi_{Q}\}_{Q \in \mathcal{Q}}$ is well-defined thanks to Proposition~\ref{propo:F}.
\end{definition}

The proof of Theorem~\ref{teo:main_box} is partially based on~\cite[Theorem 4]{KurataSugano}.
\begin{proof}[Proof of the first inequality in~\eqref{eq:main_1}]
By the min--max principle it is enough to find $M(\lambda)$-dimensional subspace $\mathcal{H}$ of $L^2(dw)$ such that
\begin{equation}\label{eq:first_goal}
    \mathbf{Q}(u,u) \leq C_1 \lambda \|u\|_{L^2(dw)}^2 \text{ for all }u \in \mathcal{H}.
\end{equation}
Fix a small real number $\varepsilon \in (0,1)$ of the form $\varepsilon=2^{-s}$ for some $s \in \mathbb{N}$ (it will be chosen latter on). By the definition of $M(\lambda)$, there are at least $M(\lambda)$ cubes $K \in ({\rm Grid})_{\varepsilon\lambda^{-1/2}}$ such that $K \cap E_{\lambda} \neq \emptyset$. For any cube $K \in ({\rm Grid})_{\varepsilon\lambda^{-1/2}}$ such that $K \cap E_{\lambda} \neq \emptyset$ let $\eta_{K}$ be a nonzero smooth function supported by $K$ such that
\begin{equation}\label{eq:gradient_eta}
    |\partial^{\alpha}\eta_{K}(\mathbf{x})| \leq C_{\alpha,\varepsilon}\lambda^{\frac{|\alpha|+\mathbf{N}}{2}} \text{ for all }\alpha \in \mathbb{N}_0^{N}, \, \mathbf{x} \in \mathbb{R}^N,
\end{equation}
\begin{equation}\label{eq:eta_upper}
    |\eta_{K}(\mathbf{x})| \geq C\lambda^{\frac{\mathbf{N}}{2}} \text{ for all }\mathbf{x} \in K_{*}
\end{equation}
(here $K_{*}$ denotes the cube of the same center as $K$ but two times smaller side-length). Since the interiors of the cubes from $({\rm Grid})_{\varepsilon\lambda^{-1/2}}$ are pairwise disjoint, it is enough to check that each of $\eta_{K}$ satisfies~\eqref{eq:first_goal}. 
It is the standard fact that
\begin{equation}\label{eq:der_compare}
\|T_jf\|_{L^{\infty}} \leq C\|\partial_{j}f\|_{L^{\infty}} \text{ for all }j \in \{1,\ldots,N\}\text{ and }f \in C^{1}(\mathbb{R}^N)
\end{equation}
(see e.g.~\cite[Lemma 5.5]{Hejna} for details). Therefore, by~\eqref{eq:der_compare} and~\eqref{eq:gradient_eta} we obtain
\begin{equation}\label{eq:upper_der}
    \sum_{j=1}^{N}\int_{K}|T_{j}\eta_{K}(\mathbf{x})|^2\,dw(\mathbf{x}) \leq C_{\varepsilon} \lambda^{\mathbf{N}+1}w(K).
\end{equation}
Let $\mathbf{x}_{K}$ be the center of $K$. Since $K \cap E_{\lambda} \neq \emptyset$, by Lemma~\ref{lem:m_growth} we get $\varepsilon\sqrt{N}\lambda^{-1/2} \leq m(\mathbf{x}_K)^{-1}$ for $\varepsilon$ small enough. Consequently, by the doubling property of the  measure $dw$ (see~\eqref{eq:doubling}), Lemma~\ref{lem:Rr}, and the definition of $m$ (see~\eqref{eq:m}) we obtain
\begin{equation}\label{eq:upper_pot}
\begin{split}
    &\int_{K}V(\mathbf{x})\eta_{K}(\mathbf{x})^2\,dw(\mathbf{x})\\& \leq C\lambda^{\mathbf{N}+1}w(K)\frac{\lambda^{-1}}{w(B(\mathbf{x}_K,\frac{1}{2}\varepsilon\lambda^{-1/2}))}\int_{B(\mathbf{x}_K,\varepsilon\sqrt{N}\lambda^{-1/2})}V(\mathbf{x})\,dw(\mathbf{x}) \\&\leq C_{\varepsilon}\lambda^{\mathbf{N}+1}w(K)\frac{m(\mathbf{x}_K)^{-2}}{w(B(\mathbf{x}_K,m(\mathbf{x}_K)^{-1}))}\left(\frac{\varepsilon\sqrt{N}\lambda^{-1/2}}{m(\mathbf{x}_K)^{-1}}\right)^{\gamma}\int_{B(\mathbf{x}_K,m(\mathbf{x}_K)^{-1})}V(\mathbf{x})\,dw(\mathbf{x}) \\&\leq C_{\varepsilon}\lambda^{\mathbf{N}+1}w(K). 
\end{split}
\end{equation}
By~\eqref{eq:upper_der} and~\eqref{eq:upper_pot} we get
\begin{equation}\label{eq:first_final_1}
    \mathbf{Q}(\eta_{K},\eta_{K}) \leq C_{\varepsilon}\lambda^{\mathbf{N}+1}w(K).
\end{equation}
On the other hand, by~\eqref{eq:growth} and~\eqref{eq:eta_upper} we have
\begin{equation}\label{eq:first_final_2}
    \lambda^{\mathbf{N}+1}w(K) \leq C\lambda\int_{K}\eta_{K}(\mathbf{x})^2\,dw(\mathbf{x}).
\end{equation}
Finally,~\eqref{eq:first_goal} follows by~\eqref{eq:first_final_1} and~\eqref{eq:first_final_2}.
\end{proof}

\begin{proof}[Proof of the second inequality in~\eqref{eq:main_1}]
By the min-max principle, it suffices to show the existence of  a subspace $\mathcal{H}$ of $L^2(dw)$ satisfying the following conditions: there exist constants $C_2,C_3>0$ such that
\begin{equation}
    \dim \mathcal{H} \leq C_2M(\lambda),
\end{equation}
\begin{equation}\label{eq:perpendicular}
    \mathbf{Q}(u,u) \geq C_3\lambda\|u\|_{L^2(dw)}^2\text{ for all }u \perp \mathcal{H} \text{ and }u\in \mathcal{D}(\mathbf{Q}).
\end{equation}

Let $\Psi \in C^{\infty}_{c}(\mathbb{R}^N)$ be a radial function such that $\int_{\mathbb{R}^N}\Psi(\mathbf{x})\,dw(\mathbf{x})=1$ and $\supp \Psi \subseteq B(0,1)$. It follows from Corollary~\ref{coro:Roesler} that
\begin{equation}\label{eq:integral_one}
    |\mathcal{F}\Psi(\xi)-1| \leq C\|\xi\| \text{ for all }\xi \in \mathbb{R}^N.
\end{equation} 
Set
\begin{align*}
    \Psi^{\lambda}(\mathbf{x})=\lambda^{\mathbf{N}/2}\Psi(\lambda^{1/2}\mathbf{x}).
\end{align*}
Let us consider $Q \in \mathcal{Q}$. If $Q^{***} \cap E_{\lambda}^c \ne \emptyset$, then thanks to Fact~\ref{fact:m_d_compare} we have $m(\mathbf{x})>c\sqrt{\lambda}$ for all $\mathbf{x} \in Q^{*}$. Consequently, for any $u \in \mathcal{D}(\mathbf{Q})$ we have
\begin{equation}\label{eq:outside}
    \lambda \int_{Q^{*}}|(u\phi_{Q})(\mathbf{x})|^2\,dw(\mathbf{x}) \leq  c^{-2}\int_{Q^{*}}|u(\mathbf x)|^2 m(\mathbf{x})^2\,dw(\mathbf{x}).
\end{equation}
If $Q^{***} \cap E_{\lambda}^c = \emptyset$, then 
$Q^{***}\subseteq E_\lambda$, so  
$m(\mathbf{x})\leq  \sqrt{\lambda}$ for all $\mathbf{x} \in Q^{***} $, and, by Fact~\ref{fact:m_d_compare}, $d(Q)\geq c\lambda^{-1/2}$.  
For such a cube $Q$ we write

\begin{equation}\label{eq:S1S2}
\begin{split}
    &\lambda \int_{Q^{*}}|(u\phi_{Q})(\mathbf{x})|^2\,dw(\mathbf{x})\\
    &\leq C\lambda  \int_{Q^*} |(u\phi_{Q})(\mathbf{x}) - \Psi^\lambda  *(u\phi_Q)(\mathbf{x})|^2\,dw(\mathbf{x}) +C\lambda \int_{Q^*} |\Psi^\lambda * (u\phi_Q)(\mathbf{x})|^2\,dw(\mathbf{x})\\&=:S_1+S_2.  
\end{split}
\end{equation}
By  Plancherel's formula~\eqref{eq:Plancherel},~\eqref{eq:der_transform},  and~\eqref{eq:integral_one} we have
\begin{equation}\label{eq:diff}
    \begin{split}
        S_1 & \leq C\lambda \int_{\mathbb R^N} \Big|\mathcal F(u\phi_Q)(\xi)\Big(1-\mathcal F(\Psi^{\lambda})(\xi)\Big)\Big|^2\, dw(\xi )\\
        &\leq C\lambda \int_{\mathbb R^N} |\mathcal F(u\phi_Q)(\xi)|^2 \lambda^{-1} \|\xi\|^2\, dw(\xi)\\
        &\leq C \int_{\mathbb R^N} \sum_{j=1}^N |T_j(u\phi_Q)(\mathbf x)|^2dw(\mathbf x).  
    \end{split}
\end{equation}
The first inequality in \eqref{eq:diff} can be thought as a counterpart of the pseudo-Poincar\'e inequality (see~\cite{DZ_Studia},~\cite{Velicu1}). Using Lemma \ref{lem:gradient} we get 
\begin{equation}\label{eq:S1}
S_1\leq C\sum_{j=1}^N \int_{Q^*}|T_ju(\mathbf x)|^2\, dw(\mathbf x)+ C\int_{\mathcal O(Q^*)} |u(\mathbf x)|^2m(\mathbf x)^2\, dw(\mathbf x).
\end{equation}
Fix a small real number $\varepsilon >0$ (it will be chosen latter on). Let 
$$ \beta(Q)=\{ K\in {\rm (Grid)}_{\varepsilon \lambda^{-1/2}}: K\cap Q^*\ne \emptyset\}.$$ 
Let $\mathbf{x}_K$ denote the center of $K\in\beta(Q)$. Set 
$$\mathcal H_Q={\rm span}\{ \Psi^{\lambda}(\mathbf{x}_K, \cdot)\phi_Q(\cdot): K\in \beta(Q)\}.$$
Clearly, $${\rm dim}\, \mathcal H_Q\leq C_N \varepsilon^{-N}{\rm   card}\,\{K\in {\rm (Grid)}_{\lambda^{-1/2}}\cap Q^*\ne\emptyset \}.$$ Then, by the definion of the Dunkl convolution (see~\eqref{eq:conv_def}), for $u\perp \mathcal H_Q$ we have  
\begin{equation}\begin{split}
    S_2&\leq C\lambda \int_{Q^*} \Big| \int_{Q^*} \sum_{K\in\beta(Q)} \chi_K(\mathbf x) \Psi^{\lambda} (\mathbf x,\mathbf y)\phi_Q(\mathbf y)u(\mathbf y)\,dw(\mathbf{y})\Big|^2\, dw(\mathbf x)\\
    &\leq  C\lambda \int_{Q^*} \Big| \int_{Q^*} \sum_{K\in\beta(Q)} \chi_K(\mathbf x)\Big( \Psi^{\lambda} (\mathbf x,\mathbf y)- \Psi^{\lambda}(\mathbf x_K,\mathbf y)\Big)\phi_Q(\mathbf y)u(\mathbf y)\,dw(\mathbf{y})\Big|^2\, dw(\mathbf x).\\
    \end{split}
 \end{equation}
 Consider the integral kernel $K_Q(\mathbf x,\mathbf y)=\sum_{K\in\beta (Q)}\chi_K(\mathbf x)| \Psi^{\lambda} (\mathbf x,\mathbf y)- \Psi^{\lambda} (\mathbf x_K,\mathbf y)| $. Then, for fixed $\mathbf x\in Q^*$, let $K'$ be the unique one such that  $\mathbf x\in K'\in\beta(Q)$. So, by Lemma \ref{lem:Psi_radial}, we have 
 \begin{equation}\label{eq:KQ1} \int K_Q(\mathbf x,\mathbf y)\, dw(\mathbf y)=\int |\Psi^{\lambda} (\mathbf x,\mathbf y)- \Psi^{\lambda} (\mathbf x_{K'},\mathbf y)|\, dw(\mathbf y)\leq C \varepsilon.  \end{equation}
 Now fix $\mathbf y\in Q^*$.   Applying once more Lemma  \ref{lem:Psi_radial}, we obtain 
 \begin{equation}
     \begin{split}
         K_Q(\mathbf x,\mathbf y)&\leq C \sum_{K\in\beta (Q)} \chi_K(\mathbf x)\frac{\| \mathbf x-\mathbf x_K\| \sqrt{\lambda}}{  w(B(\mathbf y,\lambda^{-1/2}))} \chi_{[0,2]}(d(\mathbf x,\mathbf y) \lambda^{1/2})\\
         &\leq  \frac{C \varepsilon}{  w(B(\mathbf y,\lambda^{-1/2}))}\chi_{[0,2]}(d(\mathbf x,\mathbf y)\lambda^{1/2}).
     \end{split}
 \end{equation}
 Consequently, 
 \begin{equation}\label{eq:KQ2} \int K_Q(\mathbf x,\mathbf y)\, dw(\mathbf x)\leq C \varepsilon . 
 \end{equation}
 Finally, thanks to Schur's test, \eqref{eq:KQ1}, and \eqref{eq:KQ2} we obtain 
 
 \begin{equation}\label{eq:S2_final}
     S_2\leq C\lambda \varepsilon^2 \| u\phi_Q\|^2_{L^2(dw)}.
 \end{equation}
 
Note that $\dim \left(\bigoplus_{Q^{***} \subseteq E_{\lambda}}\mathcal{H}_{Q}\right) \leq C_2 M(\lambda)$. Now, if $u$ is orthogonal to the all Hilbert spaces $\mathcal H_Q$ for $Q \in \mathcal{Q}$ such that $Q^{***}\subseteq E_{\lambda}$, by~\eqref{eq:S1S2},~\eqref{eq:S1},~\eqref{eq:S2_final}, and Proposition~\ref{propo:F}, we conclude 
 \begin{equation*}\begin{split}
     \lambda \| u\|_{L^2(dw)}^2 
     &\leq C\lambda \sum_{Q^{***}\cap E_\lambda^c\ne \emptyset}\| u\phi_Q\|_{L^2(dw)}^2+ 
     C\lambda \sum_{Q^{***}\subseteq E_{\lambda} }
     \| u\phi_Q\|_{L^2(dw)}^2 \\
     &\leq C\int_{\mathbb R^N} |u(\mathbf x)|^2m(\mathbf x)^2\, dw(\mathbf x) +   C \sum_{j=1}^N \int_{\mathbb R^N}|T_ju(\mathbf x)|^2\, dw(\mathbf x) +C\lambda \varepsilon^2 \| u\|_{L^2(dw)}^2. 
 \end{split}
 \end{equation*}
 Now taking $\varepsilon$ small enough and using the Fefferman--Phong inequality (see Theorem~\ref{theo:Fefferman-Phong}) we obtain the claim.  
\end{proof}

\begin{proof}[Proof of~\eqref{eq:min_m}]
Let $f_0 \in \mathcal{D}(L)$ be a nonzero function such that $Lf_{0}=\lambda_{0}f_{0}$. Thanks to the Fefferman--Phong inequality we have 
\begin{equation}
\begin{split}
    \min_{\mathbf{x} \in \mathbb{R}^N}m(\mathbf{x})^2 \|f_0\|_{L^2(dw)}^2 &\leq \int_{\mathbb{R}^N}|f_0(\mathbf{x})|^2m(\mathbf{x})^2\,dw(\mathbf{x}) \leq C\mathbf{Q}(f_0,f_0)\\&=C\langle Lf_0,f_0\rangle=C\lambda_0^2 \|f_0\|_{L^2(dw)}^{2},
\end{split}
\end{equation}
so~\eqref{eq:min_m} follows.
\end{proof}

{\bf Acknowledgment.} The author would like to thank Jacek Dziuba\'nski for his helpful comments and suggestions.

\end{document}